\documentclass[11pt]{amsart}

\usepackage{hyperref}

\usepackage{xcolor}

\usepackage{amsmath, amssymb, amsfonts, amsthm, fullpage}
\usepackage{graphics,graphicx}
\usepackage{accents}
\usepackage{color}

\usepackage{algorithm}
\usepackage{algorithmicx}
\usepackage{algpseudocode}

\usepackage[margin=0.9in]{geometry}

\newcommand{\R}{\mathbb{R}}

\newcommand{\norma}[1]{{\left\vert\kern-0.25ex\left\vert\kern-0.25ex\left\vert #1
    \right\vert\kern-0.25ex\right\vert\kern-0.25ex\right\vert}}

\makeatletter

\newcommand{\sign}{\mathop{\operator@font sign}}
\makeatother

\newcommand{\by}{\mathbf{y}}

\newcommand{\bF}{\mathbf{F}}

\newcommand{\bw}{\mathbf{w}}

\newcommand{\bv}{\mathbf{v}}
\newcommand{\bA}{\mathbf{A}}

\newcommand{\bD}{\mathbf{D}}

\newcommand{\bG}{\mathbf{G}}

\newcommand{\bJ}{\mathbf{J}}

\newcommand{\bW}{\mathbf{W}}

\newcommand{\tbn}[1]{{\left\vert\kern-0.25ex\left\vert\kern-0.25ex\left\vert #1 \right\vert\kern-0.25ex\right\vert\kern-0.25ex\right\vert}}

\newcommand{\ip}{{i+\frac12}}
\newcommand{\im}{{i-\frac12}}
\newcommand{\xim}{x_{i-\frac12}}
\newcommand{\xip}{x_{i+\frac12}}

\newtheorem{remark}{Remark}[section]
\newtheorem{lemma}{Lemma}[section]
\newtheorem{proposition}{Proposition}[section]
\newtheorem{theorem}{Theorem}[section]

\newtheorem{definition}{Definition}[section]

\AtBeginDocument{%
  \setlength{\belowdisplayskip}{5pt} \setlength{\belowdisplayshortskip}{5pt}
  \setlength{\abovedisplayskip}{5pt} \setlength{\abovedisplayshortskip}{5pt}
}

\begin{document}
\title[A high-order regularization of the non-linear shallow water equations with weakly singular shock waves and its approximation by finite volume methods]{A high-order regularization of the non-linear shallow water equations with weakly singular shock waves and its approximation by finite volume methods}

\author{Rezwana Razzaque Angana}
\address{\textbf{R.R.~Angana:} Victoria University of Wellington, School of Mathematics and Statistics, PO Box 600, Wellington 6140, New Zealand}
\email{rezwana.angana@vuw.ac.nz}

\author{Dimitrios Mitsotakis}
\address{\textbf{D.~Mitsotakis:} Victoria University of Wellington, School of Mathematics and Statistics, PO Box 600, Wellington 6140, New Zealand}
\email{dimitrios.mitsotakis@vuw.ac.nz}

\subjclass[2000]{35Q35, 65M08, 35C07}

\date{\today}


\keywords{weakly singular shock waves, regularized shallow water equations, cusped solitons, finite volume methods}

\begin{abstract}
Considered herein is a high-order regularization of the nonlinear shallow water equations within the framework of water wave theory. The regularized system is Galilean invariant and its solutions maintain an energy level that closely matches that of the nonlinear shallow water equations. However, in contrast to the classical nonlinear shallow water system, which admits discontinuous shock waves, the regularized formulation gives rise to weakly singular shock waves, which have continuous spatial profiles with unbounded spatial derivatives at isolated points. Using dynamical systems techniques, we establish the existence of such waves. Although weakly singular traveling waves remain continuous over their entire domain, their numerical approximation via finite element or pseudospectral schemes is affected by the emergence of spurious oscillations. To address this issue, we explore several finite volume methods for the accurate numerical approximation of these solutions. Our results demonstrate that the regularized system effectively reproduces the dynamics of the nonlinear shallow water equations in several scenarios. Moreover, our computations indicate that weakly singular shock waves are dynamically stable and can arise from general initial conditions connecting two asymptotic states. In contrast, other weakly singular structures, such as cusped solitons, appear to be structurally unstable, as we were unable to generate them from generic initial data.
\end{abstract}

\maketitle

\section{Introduction}

Infinitely long non-dispersive water waves are often described by the Nonlinear Shallow Water Equations \cite{Whitham11}, which in dimensionless and unscaled form can be expressed as:
\begin{equation}\label{eq:sw1}
\begin{aligned}
&\eta_t+[(D+\eta)u]_x=0\ ,\\
&u_t+g\eta_x+uu_x= 0\ ,
\end{aligned}
\end{equation}
where $g$ is the acceleration due to gravity, $y=-D(x)$ represents the bottom topography, $y=\eta(x,t)$ is the free surface elevation above the undisturbed level $y=0$, $u=u(x,t)$ is the depth-averaged horizontal velocity. In the previous notation, $x$, $y$ and $t$ are the horizontal, upward vertical and temporal coordinates, respectively.

Certain waves, as described by nonlinear and non-dispersive hyperbolic conservation laws, eventually break, leading to discontinuous shock waves. To address the challenges arising from the mathematical properties of hyperbolic conservation laws, various regularization techniques have been developed in the literature. The regularization of hyperbolic conservation laws has been explored in several works (cf. e.g., \cite{BL2000,FL2005}). The methodology typically involves adding small perturbations to regularize the discontinuous solutions of the equations.

For instance, to regularize the simple inviscid Burgers equation $u_t+uu_x=0$, one can consider the viscous Burgers equation $u_t+uu_x-\varepsilon u_{xx}=0$, where  $\varepsilon>0$ is small. This technique works well for the Burgers equation, but its solutions are dissipative, eventually causing the ``energy'' of the solution to vanish. To overcome the dissipative nature of the viscous Burgers equation, dispersive regularizations have also been considered.

For example, the KdV equation $u_t+uu_x+\delta u_{xxx}=0$ has been considered as an approximation of the inviscid Burgers equation. However, it is well-known that solutions of the KdV equation with steep gradients tend to evolve into dispersive shock waves as $\delta\to 0$ \cite{GP1987,EHS2017}. In \cite{BS1985}, the dissipative KdV equation $u_t+uu_x+\delta u_{xxx}-\varepsilon u_{xx}=0$ was studied. Because of the highly oscillatory nature of the solutions of the KdV equation for small values of $\delta>0$, it was shown in \cite{BS1985} that for the solution of the KdV-Burgers equation to converge to a shock-wave profile the ratio $\delta/\varepsilon^2$ must be appropriately bounded. 

Contrary to dispersive and dissipative regularizations, non-dispersive and non-dissipative regularizations of the Burgers equation have also been proposed \cite{GJCP2022, BF2006}. The high-order non-dispersive and non-dissipative regularization of the Burgers equation derived in \cite{GJCP2022} results in weakly singular shock waves that are continuous but possess a singular derivative at a single point. These regularized shock waves exhibit behavior similar to classical shock waves, while benefiting from the advantages conferred by continuity.

For the shallow water equations (\ref{eq:sw1}), various regularization techniques have also been developed. These include weakly nonlinear extensions, \cite{BS1985,BCS2002} as well as strongly nonlinear approaches \cite{HFS2022,Lannes2013,S1953I,S1953II}. Direct dispersive extensions of the nonlinear shallow water equations are the weakly nonlinear and weakly dispersive Boussinesq systems of \cite{BCS2002} with flat bottom topography (for extensions of these systems with variable bottom see \cite{IKKM2021, AM2024}), which in dimensionless but scaled variables, these systems can be written as
\begin{equation}\label{eq:bous1}
\begin{aligned}
&\eta_t+[(1+\eta)u]_x-[au_{xxx}+b\eta_{xxt}]=0\ ,\\
&u_t+\eta_x+uu_x-[c\eta_{xxx}+d u_{xxt}]=0\ ,
\end{aligned}
\end{equation}
where $a,b,c,d$, are appropriately chosen real parameters. However, these systems also tend to produce dispersive shock waves. By including sufficiently large dissipation, one can show that, similarly to the KdV-Burgers equation, the dissipative Boussinesq equations can exhibit regularized shock waves,  \cite{BMT2022}.

In the case of strongly nonlinear models such as the Serre-Green-Naghdi equations, it is known that smooth waves with steep gradients evolve into smooth dispersive shock waves \cite{EGS2006}. However, again, with the addition of either dissipative terms, \cite{BMT2023,BMT2024}, or high-order nonlinear and dispersive terms \cite{CD2017,CDM2019}, waves with steep gradients can evolve into regularized shock waves. Specifically, for generalized Serre equations, the inclusion of surface tension effects (in the critical regime with Bond number $1/3$), \cite{MDAZ2017,DMM2018}, or similar high-order Hamiltonian regularization \cite{CD2017, PPDC2018, CDM2019}, allows waves with steep gradients to evolve into weakly singular shock waves. Such solutions are the subject of this work. Note that high-order systems usually serve as good approximations of the Euler equations and have many applications in geophysics, oceanography and coastal hydrodynamics, \cite{KDGS2018,KDFG2020,DM2008}.

In this paper, we explore an alternative non-dispersive and non-dissipative regularization of shallow water equations very similar to those proposed in \cite{CD2017, CDM2019, CDM2024} that are not conservative but simple to demonstrate that weakly singular traveling waves are not a consequence of a Hamiltonian structure but rather an effect of a singularity formed by the inclusion of the high-order terms in combination with the non-dispersive nature of the equations. We also study the approximation of these waves by finite volume methods. Although weakly singular waves are continuous but with unbounded derivative at a singular point, their numerical approximation by finite element or spectral methods is challenging because the singularity in the derivative still causes the generation of spurious oscillations. In \cite{CD2017, CDM2019} pseudo-spectral methods were employed accompanied by filtering technique to eliminate spurious oscillations. In other models, such as the Camassa-Holm equation, finite element and pseudospectral methods appeared to be adequate to capture weakly singular solutions with bounded but discontinuous derivative such as peakons, however, with linear convergence rates, \cite{ADM2019}.

Specifically, in this work we consider the following perturbed shallow water equations
\begin{equation}\label{eq:bous2}
\begin{aligned}
&\eta_t+[(D+\eta)u]_x=0\ ,\\
&u_t+g\eta_x+uu_x=\delta [g\eta_{xxx}+u_{xxt}+ uu_{xxx}]\ ,
\end{aligned}
\end{equation}
where $\delta>0$ is a (small) parameter with dimensions of length squared. This system contains artificial terms from high-order Boussinesq approximations assuming small bottom variations \cite{BCS2002,Chen2003}, which are neglected as insignificant in the  regime of infinitely long shallow water waves. Considering also a small  parameter $\delta$, these artificial terms can be considered as high-order regularization terms that are asymptotically consistent with the particular water waves regime. Furthermore, this system can be considered as a direct extension of the regularized Burgers equation of \cite{GJCP2022} for bi-directional wave propagation.

Boussinesq systems are not generally invariant under vertical translations \cite{dkm2011}, and their dispersion relation depends on the background level of the solution. However, the linear dispersion relation of (\ref{eq:bous2}) with flat bottom $D=const$ around a level $(\eta_0,u_0)$ is given by the relation of the frequency $\omega(k)$ of linear waves as function of the wavenumber $k$
\begin{equation}\label{eq:disprel1}
\omega(k) = k[u_0\pm \sqrt{g(D+\eta_0)}]\ ,
\end{equation}
and coincides with the corresponding linear dispersion relation of (\ref{eq:sw1}). Thus system (\ref{eq:bous}) exhibits non-dispersive and non-dissipative behavior similar to that of the nonlinear shallow water equations (\ref{eq:sw1}).

Furthermore, system (\ref{eq:bous2}) is invariant under the Galilean transformation 
$$x\to x-ct,\quad t\to t,\quad \eta\to \eta, \quad u\to u+c\ .$$
Therefore, it shares these fundamental properties with the shallow water equations and extends the Galilean invariant classical Boussinesq system \cite{DDM2013}.

In this paper, we study system (\ref{eq:bous2}) and demonstrate using dynamical system techniques that when the bottom topography is flat, it admits traveling wave solutions featuring a point singularity, and in particular weakly singular shock waves and cusped solitons. Both types of solutions exhibit an unbounded derivative at a point in space, but with distinct characteristics: cusped solitons correspond to homoclinic orbits to the origin, whereas weakly singular shocks represent heteroclinic orbits connecting two different states. It is worth mentioning that although theoretically cusped solitons may exist as special solutions of this model (see also \cite{PPDC2018}), we failed to generate them in our numerical experiments, indicating that they are unstable.

It has been observed that, for conservative systems, suitable adaptations of multi-symplectic schemes \cite{DCF2013,DDM2019,DDM2019ii} could be advantageous. Nonetheless, as we will show below, the emergence of weakly singular shock waves indicates energy dissipation, and thus dissipative schemes are necessary to control the numerical behavior. Given also that the system (\ref{eq:bous2}) is non-dispersive, we investigate its solutions using finite volume methods. Specifically, we study the approximation properties of the finite volume methods for weakly singular solutions generated by smooth initial conditions that connect two asymptotic states, which may be either different or the same. We compare these solutions with corresponding solutions of the shallow water equations. The effectiveness of finite volume methods for handling smooth nonlinear and dispersive waves has been extensively studied (cf. e.g., \cite{dkm2013,dkm2011,DCMM2013,DDM2018}), particularly in contexts involving wave breaking and long-wave runup. A numerical study of solutions of the generalized Serre-Green-Naghdi equations has been conducted in \cite{PZR2018}, and demonstrated that finite volume methods can be beneficial for studying non-oscillatory waves with steep gradients. It is noted that weakly singular traveling waves are common to high-order nonlinear and dispersive differential equations, and sometimes dynamical systems combined with algebraic techniques can be applied to detect such traveling waves \cite{CDG2014,CDG2015,CDG2016}.

The paper is organized as follows. In Sections~\ref{sec:existence} and~\ref{sec:gwsw} we investigate the existence of traveling-wave solutions of system~(\ref{eq:bous}) from a dynamical-systems perspective. A phase-plane analysis of the profile equations exhibits an interior singularity of the flow and shows how orbits can be concatenated across it to form continuous profiles with an unbounded derivative at a single point, namely weakly singular shock waves and cusped solitons, first for waves connecting to the undisturbed state (Section~\ref{sec:existence}) and then for general two-state configurations (Section~\ref{sec:gwsw}). In Section~\ref{sec:weak}, we rigorously formalize this construction and show that the concatenated orbits, also referred to as glued profiles, constitute weak (distributional) solutions of the traveling-wave equations, even though their derivatives exhibit a singularity. Section~\ref{sec:conservation} shows that, for small $\delta$ the energy of the regularized system stays close to that of the nonlinear shallow water equations, with a deviation of order $O(\sqrt{\delta}\,t)$ even in the presence of weakly singular traveling waves. Section~\ref{sec:finitevols} develops finite volume methods for the discretization of (\ref{eq:bous}) and Section \ref{sec:accuracy} focuses on the accuracy of the method for the approximation of weakly singular shock waves. Section \ref{sec:numericalexp} reports numerical experiments that assess these methods and examine the generation, interaction, and stability of weakly singular solutions, including their behavior over a variable bottom. Finally, Section~\ref{sec:conclusions} summarizes our findings and discusses their implications.

\section{weakly singular traveling wave solutions}\label{sec:existence}

In this section, we investigate potential traveling wave solutions of the regularized system (\ref{eq:bous2}). For simplicity, we consider (\ref{eq:bous2}) with $g=D=1$ and $\delta>0$ in the form 
\begin{equation}\label{eq:bous}
\begin{aligned}
&\eta_t+[(1+\eta)u]_x=0\ ,\\
&u_t+\eta_x+uu_x-\delta [\eta_{xxx}+u_{xxt}+ uu_{xxx}]=0\ .
\end{aligned}
\end{equation}
We search for traveling waves with phase speed $s>1$ in the form 
\begin{equation}\label{eq:ansatz}
\eta(x,t)=\eta(x-s t)\qquad \text{and}\qquad u(x,t)=u(x-s t)\ ,    
\end{equation}
such that
\begin{equation}\label{eq:bcs}
\begin{aligned}
&\lim_{\xi\to-\infty} \eta(\xi)=\eta_{-} \quad\text{and} \quad \lim_{\xi\to-\infty} u(\xi)=u_{-}\\
&\lim_{\xi\to+\infty} \eta(\xi)=\eta_{+} \quad \text{and} \quad \lim_{\xi\to+\infty} u(\xi)=u_{+}\ ,  
\end{aligned}
\end{equation}
where $\xi=x-s t$ defines a coordinate system relative to the traveling wave's frame of reference. For simplicity we consider here 
\begin{equation}\label{eq:bcs2}
\eta_{-}>0, \quad u_{-}>0,\quad \eta_{+}=0\quad \text{and}\quad u_{+}=0\ .
\end{equation}
Due to the symmetry ($\xi\to -\xi$, $s\to -s$), the roles of $\eta_{\pm}$ and $u_{\pm}$ can be interchanged, thereby making the analysis of left-traveling waves analogous to that of right-traveling waves. Additionally, we assume that all the derivatives of the solution vanish at infinity, i.e. $$\lim_{|\xi|\to \infty} \frac{d^n}{dx^n}\eta(\xi)=\lim_{|\xi|\to \infty}\frac{d^n}{dx^n}u(\xi)=0 \quad \text{for all $n=1,2,\dots$}\ .$$

Substituting the {\em ansatz} (\ref{eq:ansatz}) into (\ref{eq:bous}) yields the system of ordinary differential equations
\begin{equation}\label{eq:systemd}
\begin{aligned}
&-s \eta'+[(1+\eta)u]'=0\ ,\\
&-s u'+\eta'+\tfrac{1}{2}(u^2)'-\delta[\eta''-su''+uu''-\tfrac{1}{2}(u')^2]'=0\ ,
\end{aligned}
\end{equation}
where we used $uu'''=(uu'')'-\tfrac{1}{2}[(u')^2]'$.
Integration of (\ref{eq:systemd}) over the interval $(\xi,+\infty)$ subject to the boundary conditions (\ref{eq:bcs}), (\ref{eq:bcs2}), and elimination of the unknown $u$ leads to the following ordinary differential equation:
\begin{equation}\label{eq:ode1}
\delta \eta''=\eta+\frac{s^2}{2(s^2-(1+\eta)^3)}\left[\eta^2(3+\eta)+3\delta \frac{(\eta')^2}{1+\eta}\right]\ ,
\end{equation}
while 
\begin{equation}
u=\frac{s\eta}{1+\eta}\ .
\end{equation}

To study (\ref{eq:ode1}), we introduce the auxiliary variable $h=\delta \eta'$ transforming it into a system of differential equations:
\begin{equation}\label{eq:sode1}
\begin{aligned}
\eta' &= h/\delta\ ,\\
h' &= \eta+\frac{s^2}{2(s^2-(1+\eta)^3)}\left[\eta^2(3+\eta)+\frac{3}{\delta} \frac{h^2}{1+\eta}\right]\ .
\end{aligned}
\end{equation}
This system can be expressed in the form $\by'=\bF(\by)$, where 
$$
\by=\begin{pmatrix}\eta\\h\end{pmatrix}\quad \text{and}\quad
\bF(\by)=\begin{pmatrix}
    h/\delta\\
    \eta+\frac{s^2}{2(s^2-(1+\eta)^3)}\left[\eta^2(3+\eta)+\frac{3}{\delta} \frac{h^2}{1+\eta}\right]
\end{pmatrix}\ .$$
It is important to observe that for $\eta(\xi) = \sqrt[3]{s^2} - 1$, the system (\ref{eq:sode1}) develops a singularity at which its right-hand side is no longer well-defined.

The equilibrium points of system (\ref{eq:sode1}) are $(\eta_i,0)$ for $i=-1,0,1,2$, where
\begin{equation}\label{eq:criticals}
\eta_{-1}=-1,\quad \eta_0=0,\quad \eta_1=\frac{s^2-4+ s\sqrt{s^2+8}}{4}\quad\text{and}\quad \eta_2=\frac{s^2-4- s\sqrt{s^2+8}}{4}\ .
\end{equation}
The critical point $\eta_1$ is positive for $s>1$. However, the critical points $\eta_{-1}$ and $\eta_2$ are negative for $s>1$ and they are excluded from our analysis. It is noteworthy that $\eta_0<\sqrt[3]{s^2}-1$, while $\eta_1>s-1>\sqrt[3]{s^2}-1$. Additionally, the critical point $\eta_1$ coincides with the corresponding critical point of the dissipative Peregrine-Boussinesq system (and of the shallow water equations), \cite{BMT2022, BMT2023}. Moreover, taking the limits as $\xi\to\pm\infty$ in system (\ref{eq:sode1}), we find $\eta_{+}=\eta_0$ and $\eta_{-}=\eta_1$.

To analyze the stability properties of the critical points $(\eta_i,0)^T$, we consider the corresponding linearizations
$$
\begin{pmatrix}
\eta-\eta_i\\
h
\end{pmatrix}'=
\begin{pmatrix}
0 & 1/\delta\\
F'(\eta_i,0) &0
\end{pmatrix}
\begin{pmatrix}
\eta-\eta_i\\
h
\end{pmatrix}\ ,
$$
where
$$F'(\eta_i,0)=1+\frac{3\eta_i^2(1+\eta_i)^2(3+\eta_i)s^2}{2(s^2-(1+\eta_i)^3)^2}+\frac{s^2(\eta_i^2+2\eta_i(3+\eta_i))}{2(s^2-(1+\eta_i)^3)}\ .$$
Therefore, the Jacobian matrices at the critical points $(\eta_0,0)$ and $(\eta_1,0)$ coincide and are given by:
$$\bJ(\eta_0,0)=\bJ(\eta_1,0)=\begin{pmatrix}
0 & 1/\delta\\
1 & 0
\end{pmatrix}\ .$$
The eigenvalues of the Jacobians are $\lambda_{\pm}=\pm 1/\sqrt{\delta}$.
Therefore, both critical points $(\eta_0,0)$ and $(\eta_1,0)$ are saddle points. 

These critical points represent the limit-sets of orbit solutions of system (\ref{eq:sode1}). However, due to the asymptote $\eta=\sqrt[3]{s^2}-1$ in the phase space, they cannot be connected by a continuous orbit (see Figure \ref{fig:phase1}). We define four regions based on the separation of the flow by the lines $\eta=\sqrt[3]{s^2}-1$ and $h=0$. Specifically, we define the regions:
$$
\begin{aligned}
&Q_1=\{(\eta,h)~:~ h>0,~ 0<\eta<\sqrt[3]{s^2}-1 \}, &Q_2=\{(\eta,h)~:~ h>0,~ \sqrt[3]{s^2}-1<\eta<\eta_{+}\}\ ,\\
&Q_3=\{(\eta,h)~:~ h<0,~ 0<\eta<\sqrt[3]{s^2}-1 \}, &Q_4=\{(\eta,h)~:~ h<0,~ \sqrt[3]{s^2}-1<\eta<\eta_{+}\}\ .\\
\end{aligned}
$$

\begin{figure}[t]
\centering
\includegraphics[width=0.8\columnwidth]{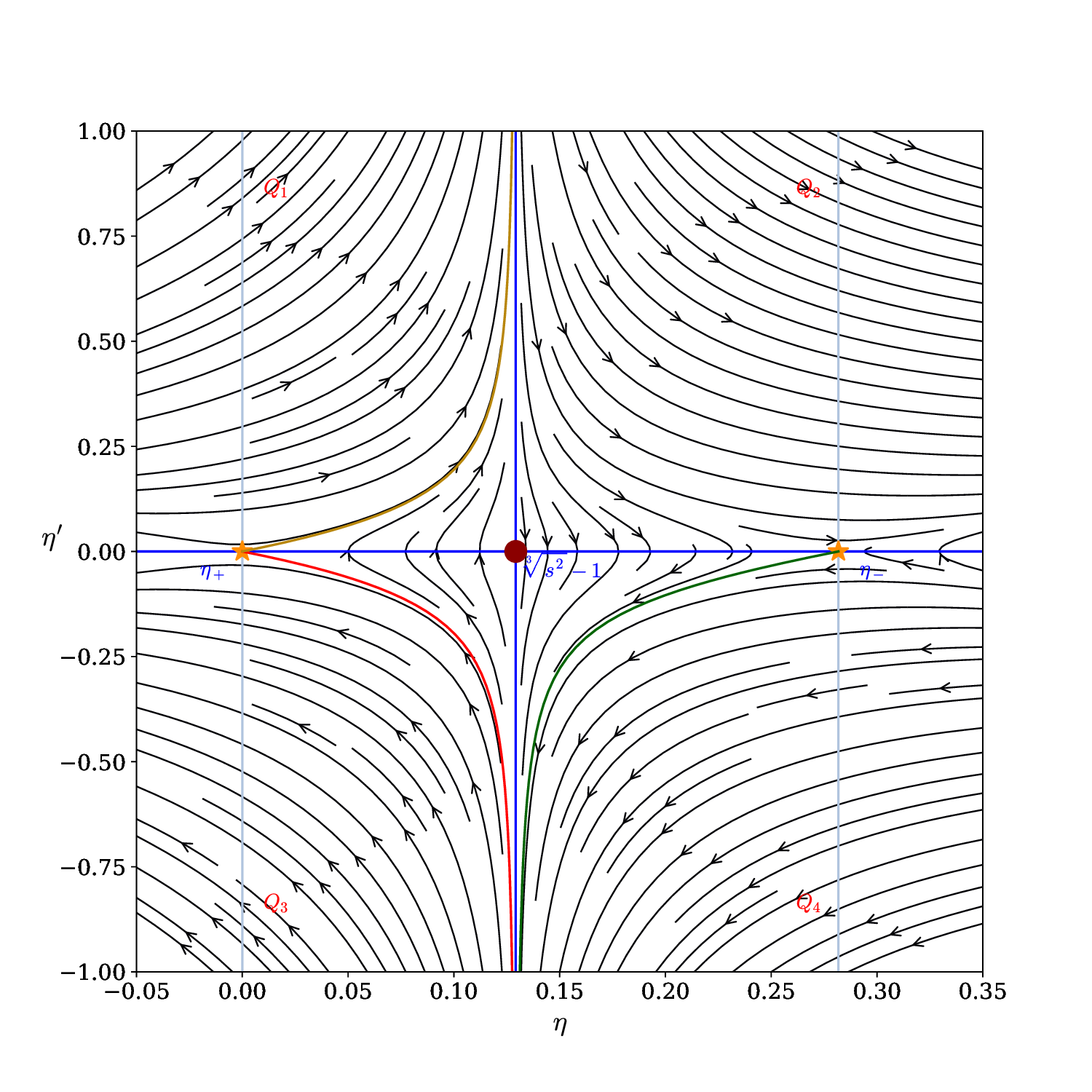}
\caption{Phase-space diagram, orbits and flow steamlines in the strip $\eta_{+}<\eta<\eta_{-}$ for $s=1.2$ and $\delta=1$}
\label{fig:phase1}
\end{figure}

The stable and unstable manifolds corresponding to the critical points $(\eta_0,0)$ and $(\eta_1,0)$ are tangent to the directions $(1,\pm\sqrt{\delta})$. Therefore, as $\delta\to 0$, the stable and unstable manifolds tend towards the $\eta$-axis. This behavior implies that orbits departing or approaching the critical points do so smoothly.

\begin{figure}[t]
\centering
\includegraphics[width=0.8\columnwidth]{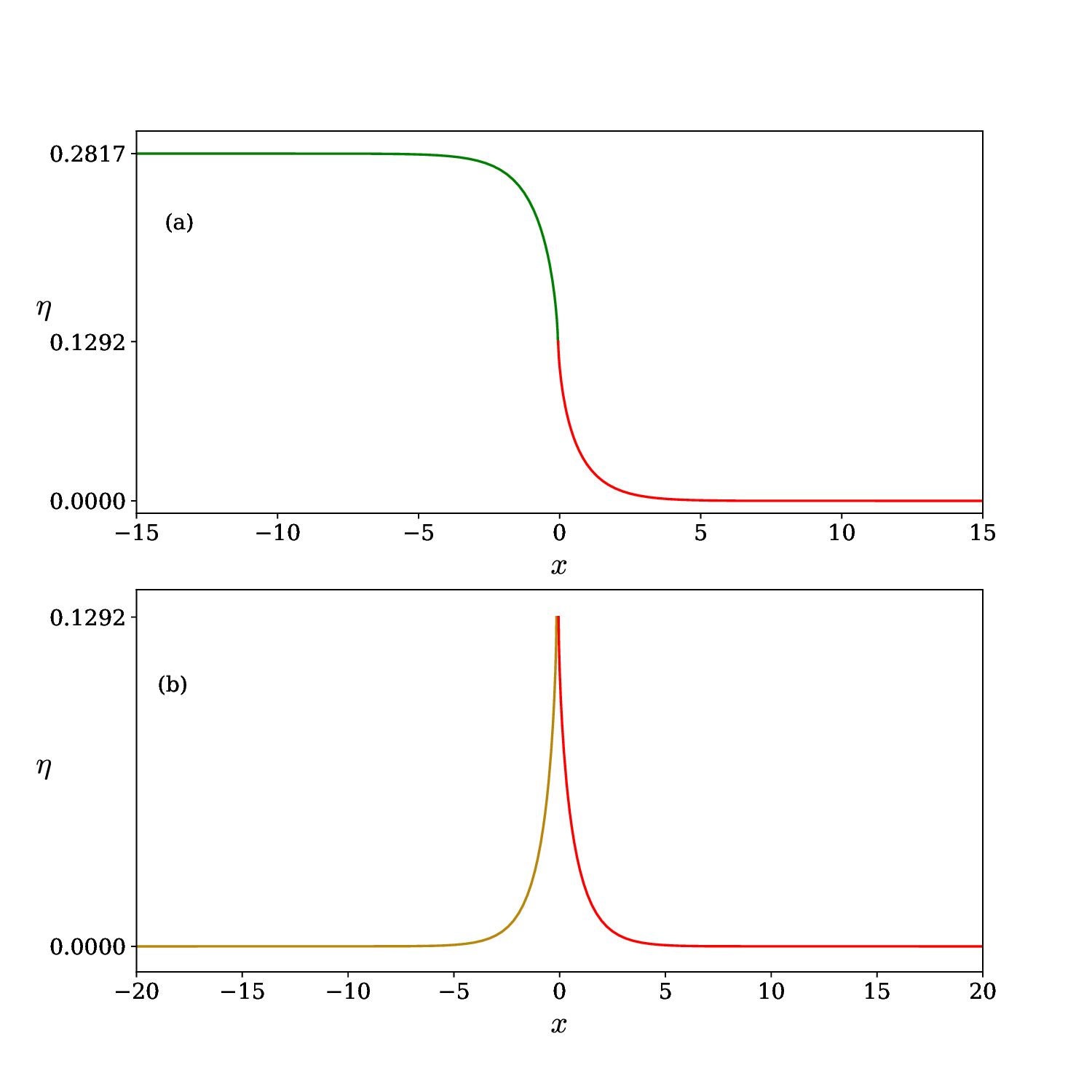}
\caption{(a) weakly singular shock solution and (b) cusped solutions of (\ref{eq:sode1}) for $s=1.2$ and $\delta=1$. Different colors correspond to different orbits}
\label{fig:rshock0}
\end{figure}

Examining the phase diagram in Figure \ref{fig:phase1}, we observe that an orbit with initial conditions in region $Q_1$, where $0 < \eta < \sqrt[3]{s^2} - 1$ and $\eta' > 0$, will remain confined to $Q_1$. This is because within this region we have $h' > 0$ from (\ref{eq:sode1}), implying that $h$ is increasing. As a result, the solution $\eta$ grows monotonically and asymptotically approaches the limiting value $\eta_\ast=\sqrt[3]{s^2} - 1$. 

Similarly, any orbit in $Q_4$ with $\sqrt[3]{s^2} - 1<\eta<\eta_{-}$, will have $h'<0$ from (\ref{eq:ode1}). Thus $h$ will remain decreasing and negative. As a result, the solution $\eta$ will be decreasing monotonically and eventually will approach the limiting value $\sqrt[3]{s^2} - 1$. For the regions $Q_2$ and $Q_3$ we can draw the same conclusions working in reverse time.
 
Because of the nature of the singularity, this will occur in finite time. In other words, it is impossible for $\lim_{\xi\to+\infty}\eta(\xi)=\sqrt[3]{s^2} -1$ and $\lim_{\xi\to+\infty}\eta'(\xi)=+\infty$. To see this, assume for contradiction that this is true, and take large $T>0$ such that $\eta'(\xi)>1$ for all $\xi>T$. Integration of the last inequality over any interval $[T,\xi]$ will give $\eta(\xi)>\eta(T)+\xi-T$. Taking $\xi\to \infty$, gives $\eta(\xi)\to\infty$. Due to the invariance of the traveling waves solutions to horizontal translations, we can assume without loss of generality that the singularity occurs at $\xi_0=0$.

Consider the orbit that tends to $(\eta_{-},0)$ as $\xi \to -\infty$ and asymptotically approaches the value $\eta_\ast=\sqrt[3]{s^2}-1$ at $\xi_0=0$. Accordingly, this orbit follows the unstable manifold in region $Q_1$ as $\xi \to -\infty$ and approaches $\eta_\ast = \sqrt[3]{s^2}-1$ as $\xi \to 0^{-}$. The green curve in Figure \ref{fig:phase1} represents this orbit, computed numerically for $s=1.2$ over the interval $[-21,-5\times 10^{-6}]$ using the adaptive Runge–Kutta method RK45, as implemented in Python’s \texttt{solve\_ivp} function \cite{Mitsotakis2023}, with the initial condition $(10^{-10},10^{-10})$. Owing to numerical limitations, however, the computation extended only up to $\xi = -0.0525$, thus yielding a half-orbit defined for $\xi < 0$.

Similarly, an orbit arriving at $(\eta_{+},0)=(0,0)$ in $Q_3$ with $0<\eta<\sqrt[3]{s^2}-1$ and $h<0$ will remain in $Q_3$ for all $\xi\in (0,+\infty)$. This follows from the second equation of (\ref{eq:sode1}), which shows that $h' > 0$ within $Q_3$. By arguments analogous to those used previously, any orbit in $Q_3$ will asymptotically approach the value $\sqrt[3]{s^2} - 1$ as $\xi \to 0^{+}$.

The red curve in Figure \ref{fig:phase1} corresponds to this orbit, computed numerically for $s=1.2$ over the interval $[5\times 10^{-6},19]$ in reverse time using the adaptive Runge–Kutta method RK45, as implemented in Python’s \texttt{solve\_ivp} function, with the initial condition $(\eta_{+} + 10^{-9},10^{-9})$. Due to numerical constraints, however, the computation extended only up to $\xi = 0.0525$, thus yielding a half-orbit defined for $\xi > 0$.

By the symmetry $(x,t) \mapsto (x,-t)$ and $(\eta,u) \mapsto (\eta,-u)$, counterparts of the orbits found in regions $Q_3$ and $Q_4$ also exist in regions $Q_1$ and $Q_2$. In region $Q_1$, these orbits extend for $\xi < 0$, while in region $Q_2$ they extend for $\xi > 0$.

An orbit departing from $(\eta_{-},0)$ satisfies the boundary condition $\lim_{\xi \to -\infty} \eta(\xi) = \eta_{-}$ and approaches the value $\sqrt[3]{s^2} - 1$ as $\xi \to 0^{-}$. Conversely, an orbit tending to the point $(\eta_{+},0)$ satisfies the boundary condition $\lim_{\xi \to +\infty} \eta(\xi) = \eta_{+} = 0$ and approaches the same value $\sqrt[3]{s^2}-1$ as $\xi \to 0^{+}$.

The yellow orbit in Figure \ref{fig:phase1} represents another half-orbit, extending for $\xi < 0$. It was numerically computed using the same method as before, with $s = 1.2$ and $\delta = 1$, over the interval $[-21,5 \times 10^{-6}]$. The initial condition for this computation was $(10^{-10}, 10^{-10})$.

Orbits from different regions of the strip $\eta_{+} < \eta < \eta_{-}$ can be combined to construct single solutions of (\ref{eq:sode1}), which satisfy the equation in a weak sense. For example, the concatenation of the red orbit from region $Q_3$ with the green orbit from region $Q_4$ produces a weakly singular shock solution, as shown in Figure \ref{fig:rshock0}(a) for $s = 1.2$ and $\delta = 1$. Similarly, synthesizing the yellow orbit from region $Q_1$ with the red orbit from region $Q_3$ results in a cusped solution of (\ref{eq:sode1}), also illustrated in Figure \ref{fig:rshock0}(b). The proof of existence of such solutions using functional analysis techniques can also be demonstrated following the same methodology as in \cite{PPDC2018}.

Traveling waves of (\ref{eq:ode1}) can also be computed more efficiently using minimization techniques, such as the Newton method with the Levenberg–Marquardt modification \cite{DDM2018}. In Section \ref{sec:numericalexp}, we employ this method to investigate weakly singular shock solutions arising from general initial conditions.

\section{General weakly singular shock waves}\label{sec:gwsw}

Consider the boundary conditions (\ref{eq:bcs}) with general $\eta_{-}>\eta_{+}$ and $u_{-}>u_{+}$. Integration of system (\ref{eq:systemd}) in $(\xi,\infty)$ yields
\begin{equation}\label{eq:systemd2}
\begin{aligned}
&-s\eta+(1+\eta)u=A\ ,\\
&-su+\eta+\tfrac{1}{2}u^2-\delta(\eta''-su''+uu''-\tfrac{1}{2}(u')^2)=B\ ,
\end{aligned}
\end{equation}
where 
\begin{equation}\label{eq:k1}
A=-s\eta_{-}+(1+\eta_{-})u_{-}=-s\eta_{+}+(1+\eta_{+})u_{+}\ ,
\end{equation}
and
\begin{equation}\label{eq:k2}
B=-su_{-}+\eta_{-}+\tfrac{1}{2}u_{-}^2=-su_{+}+\eta_{+}+\tfrac{1}{2}u_{+}^2\ .
\end{equation}
In this setting, the Lax condition requires
$$u_{+}+\sqrt{1+\eta_{+}}<s<u_{-}+\sqrt{1+\eta_{-}}\ ,$$
while from (\ref{eq:k1}) and (\ref{eq:k2}) we have
\begin{equation}\label{eq:equilib}
s=\frac{(1+\eta_{-})u_{-}-(1+\eta_{+})u_{+}}{\eta_{-}-\eta_{+}}\quad \text{and}\quad s=\frac{\eta_{-}+\tfrac{1}{2}u_{-}^2-\eta_{+}-\tfrac{1}{2}u_{+}^2}{u_{-}-u_{+}}\ .
\end{equation}
These conditions coincide with the corresponding shock conditions of the classical nonlinear shallow water equations (\ref{eq:sw1}), as well as the Serre-Green-Naghdi and Peregrine systems \cite{BMT2024,BMT2022}.

Setting $A=C+s$ and $B=G-\tfrac{1}{2}s^2$, (\ref{eq:systemd2}) yields
\begin{equation}\label{eq:systemd3}
\begin{aligned}
& u=s+\frac{C}{1+\eta}  \ ,\\
& \eta+\tfrac{1}{2}(u-s)^2-\delta(\eta''+(u-s)u''-\tfrac{1}{2}(u')^2)=G\ ,
\end{aligned}
\end{equation}
where $C=(1+\eta_{+})(u_{+}-s)$ and $G=\eta_{+}+\tfrac12(u_{+}-s)^2$.
Substitution of the first equation of (\ref{eq:systemd3}) into the second equation leads to the second order differential equation:
\begin{equation}\label{eq:systemd4}
\delta \eta''=\eta+\frac{C^2}{2(C^2-(1+\eta)^3)}\left[\sigma (1+\eta)^3-(1+3\eta)+3\delta \frac{(\eta')^2}{(1+\eta)}\right]\ ,
\end{equation}
where
$$\sigma=\frac{2\eta_{+}+(s-u_{+})^2}{(1+\eta_{+})^2(s-u_{+})^2}\ .$$
In the case where $\eta_{+}=u_{+}=0$, the equation (\ref{eq:systemd4}) coincides with (\ref{eq:ode1}). Moreover, the analysis of the previous section applies identically to the equation (\ref{eq:systemd4}). In the general case, the singularity occurs when $\eta=\eta_\ast$ where
$$\eta_\ast:=\sqrt[3]{C^2}-1=\sqrt[3]{[(u_{+}-s)(1+\eta_{+})]^2}-1 \ .$$ 
The critical points satisfy the equation
\begin{equation}\label{eq:cubiceq}
(1 + \eta) (-2 \eta (1 + \eta)^2 + C^2 (-1 + (1 + \eta)^2 \sigma))=0\ .
\end{equation}
Thus, again one critical point is $\eta_{-1}=-1$, while the rest depend on the values of $\eta_{\pm}$ and $u_{\pm}$.

Specifically, we obtain 
\begin{equation}
\eta_0=\eta_{+},\quad  \eta_1=-1 + \frac{1}{4} (s - u_{+}) (s + \sqrt{8(1+\eta_{+}) + (s - u_{+})^2} - u_{+})\ ,
    \end{equation}
and $\eta_2=-1 - \tfrac{1}{4} (s - u_{+}) (-s + \sqrt{8(1+\eta_{+}) + (s - u_{+})^2} + u_{+})$, where the equilibria $\eta_{-1}$, and $\eta_2$ are excluded for physically meaningful solutions as before. The value $\eta_1$ is the value of $\eta_{-}$ we obtain after eliminating $u_{-}$ from (\ref{eq:equilib}) and solving for $\eta_{-}$. These  are the same equilibria one can obtain for the shallow water equations (\ref{eq:sw1}).

In Section \ref{sec:numericalexp} we present the formation of general weakly singular shock waves as a result of the head-on collision of two weakly singular shock waves traveling in opposite directions.

\section{Existence of weakly singular traveling waves}\label{sec:weak}

Throughout, $\mathcal{D}(\mathbb{R})=C_c^{\infty}(\mathbb{R})$ denotes the space of smooth, compactly supported test functions and $\mathcal{D}'(\mathbb{R})$ its dual, the space of distributions, with duality pairing $\langle\cdot,\cdot\rangle$. For $1\le p\le\infty$, $L^p_{\mathrm{loc}}(\mathbb{R})$ is the space of measurable
functions $f$ with $\int_K|f|^p\, d\xi<\infty$ for every compact $K\subset\mathbb{R}$, and $W^{k,p}_{\mathrm{loc}}(\mathbb{R})$ the space of $f\in L^p_{\mathrm{loc}}(\mathbb{R})$ whose distributional derivatives up to order
$k$ also belong to $L^p_{\mathrm{loc}}(\mathbb{R})$. We write $C(\mathbb{R})$ and $C^1(\mathbb{R})$ for the continuous and continuously differentiable functions, and, for $0<\alpha\le1$, $C^{0,\alpha}_{\mathrm{loc}}(\mathbb{R})$ for the locally $\alpha$-H\"older continuous functions, i.e.\ those $f$ for which
$\sup\{|f(\xi)-f(\zeta)|/|\xi-\zeta|^{\alpha}:\xi\neq\zeta\in K\}<\infty$ on every compact $K$. A prime denotes $d/ d\xi$. We also write $o(1)$ for a quantity tending to $0$ as $\xi\to0$, and $f\sim g$ when $f/g\to1$.

In Sections \ref{sec:existence} and \ref{sec:gwsw}, we patched half-orbits of the planar system (\ref{eq:sode1}) and asserted that the result is a traveling wave in a weak sense. We now make this precise.

Let
\begin{equation}\label{eq:Pdefg}
P:=\eta+\tfrac{1}{2}(u-s)^2\ ,
\end{equation}
is the momentum flux.
Using $u=s+\tfrac{C}{1+\eta}$ and setting
\begin{equation}\label{eq:Qidg}
Q=\eta''+(u-s)u''-\tfrac{1}{2}(u')^2=P''-\tfrac{3}{2}(u')^2\ ,
\end{equation}
the second equation of the momentum balance (\ref{eq:systemd3}) becomes
\begin{equation}\label{eq:starfluxg}
P-\delta P''+\tfrac{3}{2}\delta\,(u')^2=G\ ,\qquad G:=\eta_{+}+\tfrac12(u_{+}-s)^2\ ,
\end{equation}
where now $P''$ is the (distributional) second derivative of a continuous function and $(u')^2$ is a nonnegative locally integrable function. Differentiating (\ref{eq:starfluxg}) and using $-su'+\eta'+uu'=[P-\tfrac12 s^2]'$ together with (\ref{eq:Qidg}) recovers exactly the momentum equation of (\ref{eq:bous}) under the traveling {\em ansatz}, with the otherwise ill-defined term $\delta\,uu_{xxx}$ realized through the combination $Q=P''-\tfrac32(u')^2$.

We will denote the singular value by $\eta_{\ast}:=\sqrt[3]{C^2}-1$. Recall that $C=(1+\eta_{+})(u_{+}-s)$ and $\sigma$ are the constants of (\ref{eq:systemd3})--(\ref{eq:systemd4}), and we write $C^2=(1+\eta_{+})^2(s-u_{+})^2=(1+\eta_{-})^2(s-u_{-})^2$ (the second equality follows from (\ref{eq:k1})). The canonical case $\eta_{+}=u_{+}=0$ of Section \ref{sec:existence} corresponds to $C=-s$, $\sigma=1$, $C^2=s^2$, and is recovered throughout by these substitutions.

First we show the existence of an explicit first integral:
\begin{lemma}\label{lem:firstintg}
Equation (\ref{eq:systemd4}) has the form $\delta\eta''=R(\eta)+S(\eta)\cdot (\eta')^2$ and
admits the first integral
\begin{equation}\label{eq:fig}
(\eta')^2=\frac{(1+\eta)^3}{\,C^2-(1+\eta)^3\,}\left(\frac{\Psi(\eta)}{\delta}+E\right),
\qquad
\Psi(\eta):=(2+\sigma C^2)(1+\eta)-(1+\eta)^2+\frac{C^2}{1+\eta}\ ,
\end{equation}
with $E$ constant along each orbit. On the half-orbit limiting on the equilibrium
$\eta_{\mathrm e}\in\{\eta_{+},\eta_{-}\}$ (where $\eta'=0$) one has $E=-\Psi(\eta_{\mathrm e})/\delta$,
so that
\begin{equation}\label{eq:fi2g}
(\eta')^2=\frac{(1+\eta)^3}{\delta\,(C^2-(1+\eta)^3)}\,\left(\Psi(\eta)-\Psi(\eta_{\mathrm e})\right)\ .
\end{equation}
Moreover the critical points of $\Psi$ are exactly the equilibria of (\ref{eq:systemd4}); in the physical range $\eta>-1$ these are $\eta_{+}$ and $\eta_{-}$, and $\Psi$ is strictly monotone on $(\eta_{+},\eta_{-})$.
\end{lemma}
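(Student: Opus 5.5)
Since (\ref{eq:systemd4}) is autonomous, I will use $\eta$ as the independent variable along each monotone piece of an orbit and write $w(\eta):=(\eta')^2$. Then $\eta''=\tfrac12\,dw/d\eta$, and (\ref{eq:systemd4}) turns into the linear first-order equation
\begin{equation*}
\frac{\delta}{2}\frac{dw}{d\eta}=R(\eta)+S(\eta)\,w,\qquad R(\eta)=\eta+\frac{C^2\left[\sigma(1+\eta)^3-(1+3\eta)\right]}{2(C^2-(1+\eta)^3)},\qquad S(\eta)=\frac{3C^2}{2(1+\eta)(C^2-(1+\eta)^3)} .
\end{equation*}
This identification of $R$ and $S$ already gives the structural claim $\delta\eta''=R(\eta)+S(\eta)(\eta')^2$.

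To solve the linear equation I need an integrating factor, that is, an antiderivative of $2S/\delta$. Partial fractions in $v=1+\eta$ give
\begin{equation*}
\frac{3C^2}{v(C^2-v^3)}=\frac{3}{v}+\frac{3v^2}{C^2-v^3},
\end{equation*}
so $\int 2S\,d\eta=\log\bigl(v^3/(C^2-v^3)\bigr)$. The integrating factor is therefore $\mu=(C^2-v^3)/v^3$, and it is independent of $\delta$ because the factor $\delta$ cancels. It follows that $\tfrac{d}{d\eta}(\mu w)=\tfrac{2}{\delta}\mu R$.

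The remaining task is to check that
\begin{equation*}
2\mu R=\frac{2(C^2-v^3)(v-1)+C^2\left[\sigma v^3-(3v-2)\right]}{v^3}
\end{equation*}
equals $\Psi'(\eta)=(2+\sigma C^2)-2v-C^2/v^2$. Expanding the numerator gives
\begin{equation*}
2C^2v-2C^2-2v^4+2v^3+\sigma C^2v^3-3C^2v+2C^2 .
\end{equation*}
After dividing by $v^3$ this is $-2v+2+\sigma C^2-C^2/v^2$, which matches $\Psi'$. Integrating then gives $\mu w=\Psi/\delta+E$, and this is (\ref{eq:fig}).

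Along a half-orbit that tends to the equilibrium $\eta_{\mathrm e}$, we have $w\to0$. Since $\eta_{\mathrm e}\ne\eta_\ast$, the factor $\mu$ stays finite and nonzero there, so letting $\eta\to\eta_{\mathrm e}$ gives $E=-\Psi(\eta_{\mathrm e})/\delta$, and hence (\ref{eq:fi2g}).

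For the critical points, I use $\Psi'=2\mu R$ once more. For $v>0$ and $v^3\neq C^2$, the condition $\Psi'=0$ holds exactly when $R=0$, and $R=0$ is the equilibrium condition of (\ref{eq:systemd4}) with $\eta'=0$. Multiplying $\Psi'$ by $v^2$ gives
\begin{equation*}
v^2\Psi'=-2v^3+(2+\sigma C^2)v^2-C^2=v\left[-2(v-1)v^2+C^2(\sigma v^2-1)\right]/v+\dots,
\end{equation*}
which is precisely the bracket in (\ref{eq:cubiceq}) divided by $v$ (with the factor $v$ removed). The point $\eta_\ast$ does not need separate treatment: there $\Psi'(\eta_\ast)=C^{2/3}(\sigma C^{2/3}-\dots)$, and it vanishes only if $\eta_\ast$ is an equilibrium, which is excluded. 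So the roots of $\Psi'$ are the cubic roots $\eta_+,\eta_1=\eta_-,\eta_2$. Of these, $\eta_2<-1$, so it falls outside the physical range (this sign is taken from Section~\ref{sec:gwsw}).

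Strict monotonicity of $\Psi$ on $(\eta_+,\eta_-)$ then follows. The cubic $-2v^3+(2+\sigma C^2)v^2-C^2$ has only the simple roots $1+\eta_+$ and $1+\eta_-$ in $v>0$, so $\Psi'$ has no zero strictly between them and keeps one sign there.

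The main obstacle I expect is this last point. I would need to confirm that the roots are simple, i.e.\ $\eta_+\neq\eta_-$ (the shock is non-degenerate), and that the third root is negative in $v$, i.e.\ that $\eta_2<-1$ holds in general and not only in the canonical case. My first attempt would be the product of the roots: it equals $-C^2/2<0$, so exactly one root is negative, and the two positive roots must be $1+\eta_\pm$.
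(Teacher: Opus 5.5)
Your proposal is correct and is essentially the paper's proof: your integrating factor $\mu=(C^2-v^3)/v^3$ for the reduced linear equation in $w=(\eta')^2$ is exactly the paper's multiplier (the paper writes $\mu(\eta')^2-\Psi/\delta=\mathrm{const}$ as a Beltrami identity for $\mathcal L=\tfrac12\mu(\eta')^2-V$, which works directly in $\xi$ without splitting orbits into monotone pieces), and your critical-point argument via $v^2\Psi'=-2v^3+(2+\sigma C^2)v^2-C^2$ and the Vieta product $-C^2/2<0$ is also the paper's. Two small remarks: first, restore the factor $\delta$ in $S$ as in (\ref{eq:RSg}), so that your linear equation $\tfrac{\delta}{2}\,dw/d\eta=R+Sw$ and the integrating factor $\exp\bigl(-\int 2S/\delta\,d\eta\bigr)$ are consistent; second, the anticipated obstacle is not one, because $v^2\Psi'$ coincides identically with the bracket in (\ref{eq:cubiceq}) (so $\eta_\ast$ needs no separate case), the two known positive roots force $w_2=-C^2/\bigl(2(1+\eta_+)(1+\eta_-)\bigr)<0$, and monotonicity only requires that the continuous function $\Psi'$ have no zero on $(\eta_+,\eta_-)$, not that the roots be simple.
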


\begin{proof}
We first write (\ref{eq:systemd4}) as 
\begin{equation}\label{eq:systemd42}
\delta\eta''=R(\eta)+S(\eta)\cdot(\eta')^2\ ,
\end{equation} 
with
\begin{equation}\label{eq:RSg}
R(\eta)=\eta+\frac{C^2\big[\sigma(1+\eta)^3-(1+3\eta)\big]}{2\left(C^2-(1+\eta)^3\right)},\quad
S(\eta)=\frac{3\delta C^2}{2\left(C^2-(1+\eta)^3\right)(1+\eta)}\ .    
\end{equation}

Multiplying (\ref{eq:systemd42}) by $\mu(\eta):=\tfrac{C^2-(1+\eta)^3}{(1+\eta)^3}$ casts it in
Euler--Lagrange form 
\begin{equation}\label{eq:lagrangeform}
\tfrac{d}{ d\xi}\partial_{\eta'}\mathcal L-\partial_\eta\mathcal L=0\ ,
\end{equation}
where 
\begin{equation}\label{eq:lagrangiani}
\mathcal L(\eta,\eta')=\tfrac12\,\mu(\eta)(\eta')^2-V(\eta)\ ,
\end{equation}
with
\begin{equation}\label{eq:Vprime}
V'(\eta)=-\tfrac1\delta\,\mu(\eta)R(\eta)\ ,
\end{equation}
and the choice of $\mu$ being fixed by $\mu'=-\tfrac2\delta\mu S$.
It is straightforward also to verify that the Beltrami energy
\begin{equation}\label{eq:beltrami}
\mathcal{H}=\eta'\partial_{\eta'}\mathcal L-\mathcal L=\tfrac12\mu(\eta)(\eta')^2+V(\eta)\ ,
\end{equation}
is conserved, i.e. $\tfrac{d}{d\xi}\mathcal{H}=0$. Integrating (\ref{eq:Vprime}), we obtain $2V=-\Psi/\delta+c$ (equivalently $\Psi'=2\mu R$)
with $\Psi$ as in (\ref{eq:fig}) and $c$ an integration constant. Thus, from (\ref{eq:beltrami}) we have
$$\mu(\eta)(\eta')^2-\Psi(\eta)/\delta=2\mathcal{H}-c\ ,$$ is constant. By setting $E= 2\mathcal{H}-c$, we obtain
(\ref{eq:fig}). The constant $E$ is determined by $\eta'\to0$ at the limiting equilibrium,
giving (\ref{eq:fi2g}).

Differentiating $\Psi$ in (\ref{eq:fig}) and multiplying with $(1+\eta)^2$, yields
$$
\Psi'(\eta)\,(1+\eta)^2=-2(1+\eta)^3+(2+\sigma C^2)(1+\eta)^2-C^2\ ,
$$
which is precisely the equilibrium cubic polynomial of (\ref{eq:cubiceq}). Its three roots in $w=1+\eta$ are $1+\eta_{+}$, $1+\eta_{-}$ and a third $w_2$. Since Vietta's rule implies that the product of the roots equals $-C^2/2<0$, while $1+\eta_{+},1+\eta_{-}>0$, the third root $w_2<0$, i.e.\ $\eta_2<-1$. Thus the only critical points of $\Psi$ with $\eta>-1$ are $\eta_{+}$ and $\eta_{-}$, and $\Psi$ is strictly monotone on $(\eta_{+},\eta_{-})$.
\end{proof}

We continue by studying the local behavior of the orbits at the singular point.

\begin{lemma}\label{lem:reggen}
The Lax condition $u_{+}+\sqrt{1+\eta_{+}}<s<u_{-}+\sqrt{1+\eta_{-}}$ is equivalent to $(1+\eta_{+})^3<C^2<(1+\eta_{-})^3$, i.e.\ to $\eta_{+}<\eta_{\ast}<\eta_{-}$. Let $\eta$ be either half-orbit of the connected orbit, with $\eta(\xi)\to\eta_{\ast}$ as $\xi\to0$. Then
\begin{equation}\label{eq:Phistarg}
\Phi_{\ast}:=\frac{\Psi(\eta_{\ast})-\Psi(\eta_{\mathrm e})}{\delta}\neq0\ ,
\end{equation}
and, as $\xi\to 0$,
\begin{equation}\label{eq:asympg}
|\eta(\xi)-\eta_{\ast}|\sim\left(\tfrac32\sqrt K\right)^{2/3}|\xi|^{2/3},\qquad
|\eta'(\xi)|\sim\left(\tfrac32\right)^{-1/3}K^{1/3}\,|\xi|^{-1/3}\ ,
\end{equation}
where $K=\tfrac{\sqrt[3]{C^2}}{3}\,|\Phi_{\ast}|>0$. 
Consequently $\eta,u\in C^{0,2/3}_{\mathrm{loc}}(\mathbb{R})$ up to $\xi=0$, while
$\eta',u'\in L^2_{\mathrm{loc}}(\mathbb{R})$ and $(u')^2\in L^1_{\mathrm{loc}}(\mathbb{R})$.
\end{lemma}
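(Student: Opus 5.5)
Our plan has three parts. First we prove the equivalence in the Lax condition. Then we do a local analysis of the first integral (\ref{eq:fi2g}) near $\eta_\ast$. Finally we read off the regularity consequences.

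\textbf{Lax condition.} We use $C^2=(1+\eta_{+})^2(s-u_{+})^2=(1+\eta_{-})^2(s-u_{-})^2$. If $s>u_{\pm}$, the inequality $s-u_{+}>\sqrt{1+\eta_{+}}$ becomes, after squaring and multiplying by $(1+\eta_{+})^2$, exactly $C^2>(1+\eta_{+})^3$. Likewise $s-u_{-}<\sqrt{1+\eta_{-}}$ is equivalent to $C^2<(1+\eta_{-})^3$. Taking cube roots gives $\eta_{+}<\eta_\ast<\eta_{-}$. The sign $s-u_{\pm}>0$ must be checked separately. At the $+$ end it follows from the Lax inequality itself. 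At the $-$ end it follows from the Rankine--Hugoniot relations (\ref{eq:equilib}) together with $\eta_{-}>\eta_{+}$: these give $C=(1+\eta_{\pm})(u_{\pm}-s)$ with a common sign, so $s-u_{+}>0$ forces $s-u_{-}>0$.

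\textbf{Nonvanishing of $\Phi_\ast$.} By Lemma~\ref{lem:firstintg}, $\Psi$ is strictly monotone on $(\eta_{+},\eta_{-})$. Since $\eta_\ast$ lies strictly inside this interval, $\Psi(\eta_\ast)\neq\Psi(\eta_{\mathrm e})$ for either endpoint, so $\Phi_\ast\neq0$. We should also confirm the sign compatibility: on each half-orbit the right side of (\ref{eq:fi2g}) must be positive. The factor $C^2-(1+\eta)^3$ changes sign exactly at $\eta_\ast$, and $\Psi-\Psi(\eta_{\mathrm e})$ has a fixed sign between $\eta_{\mathrm e}$ and $\eta_\ast$. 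Consistency with the phase portrait therefore holds, and this is where the existence of the half-orbits from Sections~\ref{sec:existence}--\ref{sec:gwsw} is taken for granted.

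\textbf{Local asymptotics.} Set $w=\eta-\eta_\ast$. Then
\[
C^2-(1+\eta)^3=-3(1+\eta_\ast)^2 w+O(w^2),\qquad (1+\eta_\ast)^3=C^2 .
\]
Hence (\ref{eq:fi2g}) gives
\[
(w')^2=\frac{C^2\,\Phi_\ast}{-3(1+\eta_\ast)^2\,w}\,(1+o(1))=\frac{K}{|w|}\,(1+o(1)),
\]
using the fact that $\Phi_\ast/w<0$ by the sign compatibility, and $C^2/(1+\eta_\ast)^2=\sqrt[3]{C^2}$. Thus $|w|^{1/2}\,|w'|\to\sqrt K$, i.e.\ $\bigl|\tfrac{d}{d\xi}|w|^{3/2}\bigr|\to\tfrac32\sqrt K$. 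Integrating from $0$, where $w\to0$ in finite $\xi$ as argued in Section~\ref{sec:existence}, gives $|w|^{3/2}\sim\tfrac32\sqrt K|\xi|$. This is the first asymptotic in (\ref{eq:asympg}). Substituting back, $|w'|\sim\sqrt K\,(\tfrac32\sqrt K|\xi|)^{-1/3}=(\tfrac32)^{-1/3}K^{1/3}|\xi|^{-1/3}$. This is the main step; the care needed is a rigorous l'Hôpital-type integration of the relation $|w|^{1/2}|w'|\to\sqrt K$, which is fine because $w'$ has a fixed sign near $0$ on each half-orbit.

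\textbf{Consequences.} Away from $0$ the solution is smooth and $\eta'$ is bounded, with exponential decay at infinity. Near $0$ we have $|\eta'|\lesssim|\xi|^{-1/3}$. This gives $\eta(\xi)-\eta(\zeta)=O(|\xi-\zeta|^{2/3})$: integrate $|\tau|^{-1/3}$ between the two points and use the subadditivity of $t\mapsto t^{2/3}$ when they straddle $0$. So $\eta\in C^{0,2/3}_{\mathrm{loc}}$. Since $|\xi|^{-2/3}$ is integrable, $\eta'\in L^2_{\mathrm{loc}}$. Finally, $u=s+C/(1+\eta)$ is a smooth function of $\eta$ because $\eta$ stays bounded away from $-1$. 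Hence $u$ inherits the Hölder bound, and $u'=-C\eta'/(1+\eta)^2\in L^2_{\mathrm{loc}}$, which gives $(u')^2\in L^1_{\mathrm{loc}}$.
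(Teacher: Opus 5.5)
Your proposal is correct and follows essentially the same route as the paper's proof: the Lax equivalence by squaring, $\Phi_\ast\neq0$ from the strict monotonicity of $\Psi$ (Lemma~\ref{lem:firstintg}), the linearization of $C^2-(1+\eta)^3$ at its simple root $\eta_\ast$, and the substitution $\Theta=|\eta-\eta_\ast|^{3/2}$ with $|\Theta'|\to\tfrac32\sqrt K$, integrated from the singular point reached at finite $\xi$. You are more explicit than the paper about the sign of $s-u_{\pm}$ and about the H\"older bound, with two small caveats: the reverse implication of the Lax equivalence still needs $s>u_{+}$ as a standing assumption, as in the paper; and for points $-a<0<b$ the inequality you need is $a^{2/3}+b^{2/3}\le 2(a+b)^{2/3}$, whereas subadditivity of $t\mapsto t^{2/3}$ is what handles two points on the same side.
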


\begin{proof}
For the Lax equivalence, $C^2=(1+\eta_{+})^2(s-u_{+})^2$ gives $C^2>(1+\eta_{+})^3$, which is equivalent $(s-u_{+})^2>1+\eta_{+}$ and equivalently with $s-u_{+}>\sqrt{1+\eta_{+}}$ (as $s>u_{+}$). This is the lower Lax inequality. The upper one follows identically from $C^2=(1+\eta_{-})^2(s-u_{-})^2$.

By Lemma \ref{lem:firstintg}, $\Psi$ is strictly monotone on $(\eta_{+},\eta_{-})$, and $\eta_{\ast}$ lies strictly inside this interval, so $\Psi(\eta_{\ast})$ is strictly between $\Psi(\eta_{+})$ and $\Psi(\eta_{-})$. In particular
$\Psi(\eta_{\ast})\neq\Psi(\eta_{\mathrm e})$, which is (\ref{eq:Phistarg}). (If $\Phi_{\ast}=0$, both factors in (\ref{eq:fi2g}) would vanish at $\eta_{\ast}$ and $(\eta')^2$ would stay bounded, thus the orbit would cross $\eta_{\ast}$ smoothly.)

Since $(1+\eta_{\ast})^3=C^2$ and $\tfrac{d}{d\eta}(C^2-(1+\eta)^3)|_{\eta_{\ast}}=-3(1+\eta_{\ast})^2
=-3(C^2)^{2/3}$, the numerator $(1+\eta)^3(\Psi(\eta)-\Psi(\eta_{\mathrm e}))/\delta$ in (\ref{eq:fi2g}) tends to $C^2\Phi_{\ast}\neq0$ while $C^2-(1+\eta)^3=-3(C^2)^{2/3}(\eta-\eta_{\ast})(1+o(1))$. To see the last equality, set $w=1+\eta$ and $w_\ast=1+\eta_\ast$ so $w_\ast^3=C^2$. Then $C^2-w^3=w_\ast^3-w^3=(w_\ast-w)(w^2+w_\ast w+w_\ast^2)$. Observe that $w_\ast-w=\eta_\ast-\eta$ and the quadratic factor is continuous and tends to its value at $w_\ast$ as $\eta\to\eta_\ast$. Thus $w^2+ww_\ast+w_\ast^2\to 3w_\ast^2$ as $w\to w_\ast$. Combining these together, yields
$C^2-(1+\eta)^3=-(\eta-\eta_\ast)(3w_\ast^2+o(1))=-3 w_\ast(\eta-\eta_\ast)(1+o(1))$. Since $w_\ast^2=(C^2)^{2/3}$, we obtain
\begin{equation}\label{eq:helpful1}
C^2-(1+\eta)^3=-3(C^2)^{2/3}(\eta-\eta_\ast)(1+o(1)),\qquad \eta\to\eta_\ast\ .
\end{equation}
Hence we obtain
\begin{equation}\label{eq:localform}
(\eta')^2=\frac{C^2\Phi_{\ast}}{-3(C^2)^{2/3}(\eta-\eta_{\ast})}(1+o(1))
=\frac{K}{|\eta-\eta_{\ast}|}(1+o(1)),\qquad K=\tfrac13\sqrt[3]{C^2}\,|\Phi_{\ast}|>0\ ,
\end{equation}
where the sign is chosen so that $(\eta')^2>0$ on both sides. 
Since $(\eta')^2$ in \eqref{eq:fi2g} diverges as $\eta\to\eta_{\ast}$, we cannot integrate it with respect to $\xi$ in a straightforward way. Instead, we introduce the new variable $\Theta:=|\eta-\eta_{\ast}|^{3/2}$, chosen so that its $\xi$-derivative remains finite and nonzero at the singular point. In a neighborhood of $\xi=0$, the derivative $\eta'$ does not change sign, hence $|\eta-\eta_{\ast}|$ is monotone, and by the chain rule we obtain
$$
|\Theta'|=\tfrac{3}{2}|\eta-\eta_{\ast}|^{1/2}\,|\eta'|
=\tfrac32\sqrt{(\eta')^2\,|\eta-\eta_{\ast}|}\ .
$$
Placing the factor $|\eta-\eta_{\ast}|^{1/2}$ under the square root isolates exactly the product governed by the local behavior $(\eta')^2=K|\eta-\eta_{\ast}|^{-1}(1+o(1))$. The divergence compensates the vanishing factor, yielding
$$
(\eta')^2\,|\eta-\eta_{\ast}|=K\,(1+o(1))\longrightarrow K,
\qquad\text{and hence}\qquad
|\Theta'|\longrightarrow\tfrac32\sqrt K\quad(\xi\to0)\ .
$$
As $\Theta$ is continuous with $\Theta(0)=0$ and $\Theta\ge0$, and $\Theta'$ tends to the constant $\tfrac32\sqrt{K}$, the mean value
theorem gives $\Theta(\xi)/|\xi|\to\tfrac32\sqrt K$, i.e.
$$
\Theta(\xi)=\tfrac32\sqrt{K}\,|\xi|\,(1+o(1))\ .
$$
Raising to the power $2/3$ and recalling $\Theta=|\eta-\eta_{\ast}|^{3/2}$ yields the
first relation in \eqref{eq:asympg},
$$
|\eta(\xi)-\eta_{\ast}|=\left(\Theta(\xi)\right)^{2/3}
=\left(\tfrac32\sqrt K\right)^{2/3}|\xi|^{2/3}(1+o(1))\ .
$$
For the second relation, we take the square root of the local form (\ref{eq:localform}) to obtain
$|\eta'|=\sqrt{K}\,|\eta-\eta_{\ast}|^{-1/2}(1+o(1))$ and substitute the first relation. Since $\sqrt K\left(\tfrac32\sqrt K\right)^{-1/3}=\left(\tfrac32\right)^{-1/3}K^{1/3}$,
$$
|\eta'(\xi)|=\sqrt K\,\left(\tfrac32\sqrt K\right)^{-1/3}|\xi|^{-1/3}(1+o(1))
=\left(\tfrac32\right)^{-1/3}K^{1/3}\,|\xi|^{-1/3}(1+o(1))\ ,
$$
which is also what one obtains by differentiating the first relation directly.

Hence the first relation in (\ref{eq:asympg}), and the second follows from $|\eta'|\sim\sqrt K\,|\eta-\eta_{\ast}|^{-1/2}$. Since $u=s+C/(1+\eta)$ is smooth in $\eta$ away from $\eta=-1$, $u$ inherits the exponent $2/3$ and $u'=-C\eta'/(1+\eta)^2$ satisfies $|u'|\sim\mathrm{const}\,|\xi|^{-1/3}$, as $2/3<1$, $\eta',u'\in L^2_{\mathrm{loc}}$ and $(u')^2\sim|\xi|^{-2/3}\in L^1_{\mathrm{loc}}$.
\end{proof}

The two half-orbits each solve the profile equation classically away from the singular point, but $\eta'$ and $u'$ blow up like $|\xi|^{-1/3}$ as $\xi\to0$, and the momentum balance contains the product $(u-s)u''$, which is ill-defined at the singularity. To glue the half-orbits into a single distributional solution one must therefore control the momentum flux across $\xi=0$: if the flux was discontinuous, its derivative would carry a Dirac mass $[\,\cdot\,]_0\,\delta_0$ at the front and the glued profile would solve the equation only up to a spurious point source. The next lemma shows that no such jump occurs. (Throughout we write $[f]_0:=f(0^{+})-f(0^{-})$ for the jump of a piecewise-smooth function across the gluing point $\xi=0$). Working with the flux variable $P=\eta+\tfrac12(u-s)^2$, it establishes that $P$ extends across the singular point as a $C^1$ function with $P'(0)=0$, the blow-up of $\eta'$ being exactly compensated by the simple zero of $(1+\eta)^3-C^2$ at $\eta_{\ast}$. This continuity of the flux is the
no-jump condition that promotes the two classical half-orbits to a weak solution of the momentum equation.

\begin{lemma}\label{lem:fluxg}
Let $\eta$ be the glued profile of Lemma~\ref{lem:reggen} and $P=\eta+\tfrac12(u-s)^2$ with $u=s+C/(1+\eta)$. Then $P\in C^1(\mathbb{R})$ with $P'(0)=0$ and $P'(\xi)=O(|\xi|^{1/3})$ as $\xi\to0$, and $P''\in L^1_{\mathrm{loc}}(\mathbb{R})$ with no singular part.
\end{lemma}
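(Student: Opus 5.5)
The plan is a direct computation of $P'$ along each half-orbit, followed by a limiting argument at the gluing point $\xi=0$. Away from $\xi=0$ both half-orbits are classical solutions, so $\eta$, $u$ and $P$ are smooth there. Writing $u-s=C/(1+\eta)$ and $u'=-C\eta'/(1+\eta)^2$, I compute
\[
P'=\eta'+(u-s)u'=\eta'\Bigl(1-\frac{C^2}{(1+\eta)^3}\Bigr)=\eta'\,\frac{(1+\eta)^3-C^2}{(1+\eta)^3}\ .
\]
This formula is the heart of the matter: the factor $(1+\eta)^3-C^2$ has a simple zero at $\eta_\ast$ that exactly offsets the blow-up of $\eta'$.

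\textbf{Step 1: the bound on $P'$.} By (\ref{eq:helpful1}), $(1+\eta)^3-C^2=3(C^2)^{2/3}(\eta-\eta_\ast)(1+o(1))$, and $(1+\eta)^3\to C^2$. Lemma~\ref{lem:reggen} gives $|\eta-\eta_\ast|\sim c_1|\xi|^{2/3}$ and $|\eta'|\sim c_2|\xi|^{-1/3}$. Hence on each side
\[
|P'(\xi)|\sim 3(C^2)^{-1/3}c_1c_2\,|\xi|^{1/3}\ ,
\]
so $P'(\xi)=O(|\xi|^{1/3})$ and $P'(\xi)\to0$ as $\xi\to0^\pm$. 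As a cross-check free of the asymptotics, I would square $P'$ and insert the first integral (\ref{eq:fi2g}):
\[
(P')^2=\frac{\bigl((1+\eta)^3-C^2\bigr)\bigl(\Psi(\eta_{\mathrm e})-\Psi(\eta)\bigr)}{\delta(1+\eta)^3}\ .
\]
This is a smooth function of $\eta$ vanishing at $\eta_\ast$, and it is $O(|\eta-\eta_\ast|)=O(|\xi|^{2/3})$.

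\textbf{Step 2: $P\in C^1(\mathbb R)$ with $P'(0)=0$.} $P$ is continuous on $\mathbb R$ because $\eta$ is continuous with $\eta(0)=\eta_\ast$ (Lemma~\ref{lem:reggen}) and $\eta_\ast>-1$. Since $P'$ has the common limit $0$ from both sides, the mean value theorem applied on $[0,\xi]$ and $[\xi,0]$ shows that $P$ is differentiable at $0$ with $P'(0)=0$. Thus $P\in C^1(\mathbb R)$.

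\textbf{Step 3: $P''\in L^1_{\mathrm{loc}}$ with no singular part.} On each half-line the half-orbit satisfies (\ref{eq:starfluxg}) classically, giving
\[
P''=\delta^{-1}(P-G)+\tfrac32 (u')^2\ .
\]
The right-hand side is locally integrable: $P$ is bounded near $0$, and $(u')^2\sim\mathrm{const}\,|\xi|^{-2/3}\in L^1_{\mathrm{loc}}$ by Lemma~\ref{lem:reggen}. So the pointwise $P''$, defined for $\xi\neq0$, lies in $L^1_{\mathrm{loc}}(\mathbb R)$. To identify it with the distributional derivative of $P'$, take $\varphi\in\mathcal D(\mathbb R)$ and integrate by parts on $(-\infty,-\varepsilon)$ and $(\varepsilon,\infty)$:
\[
-\int_{|\xi|>\varepsilon}P'\varphi'\,d\xi=\int_{|\xi|>\varepsilon}P''\varphi\,d\xi+P'(-\varepsilon)\varphi(-\varepsilon)-P'(\varepsilon)\varphi(\varepsilon)\ .
\]
The boundary terms vanish as $\varepsilon\to0$ by Step 1, so $\langle (P')',\varphi\rangle=\int P''\varphi\,d\xi$. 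Hence there is no Dirac mass at $0$: $[P']_0=0$, and $[P]_0=0$ by continuity.

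\textbf{Expected obstacle.} The only delicate point is the exact cancellation in Step 1. It requires that $\eta'$ does not vanish near $0$ and that the asymptotic rates of Lemma~\ref{lem:reggen} hold on both half-orbits. The first-integral identity for $(P')^2$ is the cleanest way to make this cancellation airtight, so I would present that version and keep the asymptotic computation only to get the rate $|\xi|^{1/3}$.
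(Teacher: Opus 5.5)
Your proposal is correct and follows the paper's proof essentially step for step: the chain-rule formula $P'=\eta'\,((1+\eta)^3-C^2)/(1+\eta)^3$ with the simple zero at $\eta_\ast$ compensating $|\eta'|\sim|\xi|^{-1/3}$, the mean value theorem for $P'(0)=0$, and reading $P''$ off the integrated balance (\ref{eq:starfluxg}). Your first-integral identity for $(P')^2$ and the explicit integration by parts over $|\xi|>\varepsilon$ make rigorous what the paper asserts in words; note that the boundary terms there should read $P'(\varepsilon)\varphi(\varepsilon)-P'(-\varepsilon)\varphi(-\varepsilon)$, a harmless sign slip since they vanish as $\varepsilon\to0$.
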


\begin{proof}
Eliminating $u$ through the algebraic relation $u=s+C/(1+\eta)$ of (\ref{eq:systemd3}) expresses the flux $P$ is written as
$$
P=\eta+\frac{C^2}{2(1+\eta)^2}\ .
$$
On $\mathbb{R}\setminus\{0\}$ the profile $\eta$ is smooth and $1+\eta>0$, so $P$ is smooth there, since $\eta$ is continuous on $\mathbb{R}$ and $1+\eta_{\ast}=\sqrt[3]{C^2}>0$, $P$ is continuous on all of $\mathbb{R}$, with $P(0)=\eta_{\ast}+\tfrac12\sqrt[3]{C^2}$.

By the chain rule, for $\xi\neq0$,
$$
P'=\frac{(1+\eta)^3-C^2}{(1+\eta)^3}\,\eta'\ .
$$
As $\xi\to0$ one has $(1+\eta)^3\to C^2\neq0$, while by (\ref{eq:asympg}) and the
linearization of the cubic at its simple root $\eta_{\ast}$,
$$
(1+\eta)^3-C^2\sim 3(C^2)^{2/3}(\eta-\eta_{\ast})\sim \mathrm{const}\,|\xi|^{2/3},
\qquad
\eta'\sim \mathrm{const}\,|\xi|^{-1/3}\ .
$$
Hence the blow-up of $\eta'$ is overcompensated by the vanishing of the numerator and
$$
P'(\xi)=O(|\xi|^{1/3})\longrightarrow 0\qquad(\xi\to 0)\ .
$$
In fact the leading coefficients do not cancel, since $\Phi_{\ast}\neq0$ (equivalently $K\neq0$). A direct computation gives $P'(\xi)\sim c\,|\xi|^{1/3}$ with $c\neq 0$, so the rate $1/3$ is sharp and cannot be improved. Because $P$ is continuous on $\mathbb{R}$, is $C^1$ on $\mathbb{R}\setminus\{0\}$, and $P'$ admits the limit
$0$ as $\xi\to0$ from both sides, the mean value theorem gives differentiability at the origin with $P'(0)=0$ and continuity of $P'$ there, thus $P\in C^1(\mathbb{R})$.

For the second derivative we use the once-integrated balance (\ref{eq:starfluxg}), which on each half-line holds classically and, solved for $P''$, reads
$$
P''=\frac{1}{\delta}\,(P-G)+\frac32\,(u')^2,\qquad (u')^2=\frac{C^2}{(1+\eta)^4}\,(\eta')^2 .
$$
The first term is continuous, hence locally bounded. For the second, $(1+\eta)^{-4}\to (C^2)^{-4/3}$ as $\xi\to0$ and, by (\ref{eq:asympg}), $(\eta')^2\sim \mathrm{const}\,|\xi|^{-2/3}$, so $(u')^2\sim \mathrm{const}\,|\xi|^{-2/3}$, which is integrable near $0$ as $-2/3>-1$. Therefore the right-hand side belongs to $L^1_{\mathrm{loc}}(\mathbb{R})$ and is a function carrying no atom at $\xi=0$. Since $P'$ is continuous across the origin (no jump), its distributional derivative coincides with this $L^1_{\mathrm{loc}}$ function: $P''\in L^1_{\mathrm{loc}}(\mathbb{R})$ with no singular part. This completes the proof.
\end{proof}

We are now ready to introduce the formal definition of a weakly singular shock wave for system (\ref{eq:bous}):

\begin{definition}\label{def:weakg}
Let $s>1$ and let $\eta_{\pm},u_{\pm}$ satisfy the shock relations (\ref{eq:equilib}) and the Lax condition. A pair $(\eta,u)\in C(\mathbb{R})^2$ is a \emph{weakly singular traveling-wave solution} of (\ref{eq:bous}) with speed $s$ and asymptotic states $\eta_{\pm},u_{\pm}$ if
\begin{itemize}
\item[(i)] $\eta\to \eta_{\pm}$ and $u\to u_{\pm}$ as $\xi\to\pm\infty$,
\item[(ii)] $\eta',u'\in L^2_{\mathrm{loc}}(\mathbb{R})$ (so $(u')^2\in L^1_{\mathrm{loc}}(\mathbb{R})$),
\item[(iii)] with $P$ given by (\ref{eq:Pdefg}), the identities
\begin{equation}\label{eq:weakprofileg}
-s\eta'+\big[(1+\eta)u\big]'=0\qquad\text{and}\qquad
\Big[P-\delta P''+\tfrac32\delta(u')^2\Big]'=0\ ,
\end{equation}
hold in $\mathcal{D}'(\mathbb{R})$.
\end{itemize}
Equivalently, $(\eta(x-st),u(x-st))$ is a solution of (\ref{eq:bous}) in $\mathcal{D}'(\mathbb{R}^2)$.
\end{definition}

Hypothesis (ii) guarantees $(u')^2\in L^1_{\mathrm{loc}}$, so that the second identity involves only the distributional derivative of $P''\in \mathcal{D}'(\mathbb{R})$ and the regular distribution $(u')^2$. The first identity is the mass equation and is equivalent, by continuity, to the algebraic first integral $u=s+\tfrac{C}{1+\eta}$ with $C=(1+\eta_{+})(u_{+}-s)$, while the second is the flux form of the momentum equation. We next show that the glued solutions are indeed solutions of the regularized shallow water system in the sense of distributions.

\begin{theorem}\label{thm:weakg}
Let $\eta_{\pm},u_{\pm}$ satisfy (\ref{eq:equilib}) and the Lax condition, with $\eta_{-}>\eta_{+}$. The phase-plane construction of Section \ref{sec:existence}, applied to (\ref{eq:systemd4}), yields two half-orbits, one
limiting on $(\eta_{-},0)$ as $\xi\to-\infty$, one on $(\eta_{+},0)$ as $\xi\to+\infty$, both approaching $\eta_{\ast}$ as $\xi\to0$. Let $\eta$ be the glued solution at $\xi=0$ and $u=s+\tfrac{C}{1+\eta}$. Then $(\eta,u)$ is a weakly singular traveling-wave solution in the sense of Definition \ref{def:weakg}.
\end{theorem}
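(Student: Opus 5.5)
We verify items (i)–(iii) of Definition \ref{def:weakg} for the glued pair in turn, relying on Lemmas \ref{lem:firstintg}, \ref{lem:reggen} and \ref{lem:fluxg}.

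\textbf{Continuity and item (i).} Each half-orbit is a classical solution of (\ref{eq:systemd4}) on its half-line. By Lemma \ref{lem:reggen}, $\eta(\xi)\to\eta_\ast$ as $\xi\to0^\pm$, so setting $\eta(0)=\eta_\ast$ makes $\eta\in C(\mathbb{R})$. Since $1+\eta_\ast=\sqrt[3]{C^2}>0$, also $u=s+C/(1+\eta)\in C(\mathbb{R})$. Item (i) holds by construction: the left half-orbit lies on the unstable manifold of $(\eta_-,0)$ and the right one on the stable manifold of $(\eta_+,0)$. The limits $u\to u_\pm$ then follow from $u_\pm=s+C/(1+\eta_\pm)$, which is (\ref{eq:k1}) with $A=C+s$.

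\textbf{Item (ii).} This is exactly the last assertion of Lemma \ref{lem:reggen}. Away from $0$ the profiles are smooth, and near $0$ we have $|\eta'|,|u'|\lesssim|\xi|^{-1/3}$, which is square integrable.

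\textbf{Mass equation.} Since $(1+\eta)u-s\eta=C+s$ holds pointwise on $\mathbb{R}$, the function $-s\eta+(1+\eta)u$ is a constant continuous function, so its distributional derivative vanishes. No care is needed at $0$ because no derivative of a singular quantity appears.

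\textbf{Momentum equation.} This is the substantive step. I will show that the stronger, once-integrated identity $P-\delta P''+\tfrac32\delta(u')^2=G$ holds in $\mathcal{D}'(\mathbb{R})$; differentiating it gives (\ref{eq:weakprofileg}).

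On each open half-line the identity holds classically. It is the second equation of (\ref{eq:systemd3}) rewritten via (\ref{eq:Qidg}), and the constant is the same $G$ on both sides. On the right this follows from the limit at $+\infty$. On the left it follows from (\ref{eq:k2}), since $G=\eta_-+\tfrac12(u_--s)^2$ as well, and all derivatives decay at $-\infty$.

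It remains to check that the distribution $P''$ has no singular part at $0$. Take $\varphi\in\mathcal{D}(\mathbb{R})$ and write $\langle P'',\varphi\rangle=\int P\varphi''$. Split the integral over $(-\infty,-\epsilon)\cup(\epsilon,\infty)$ and integrate by parts twice. The boundary terms are $\pm P'(\pm\epsilon)\varphi(\pm\epsilon)$ and $\pm P(\pm\epsilon)\varphi'(\pm\epsilon)$. The second pair cancels in the limit because $P$ is continuous. The first pair tends to $0$ because $P'(\xi)=O(|\xi|^{1/3})$ by Lemma \ref{lem:fluxg}.

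The remaining integral $\int_{|\xi|>\epsilon}P''\varphi$ converges as $\epsilon\to0$ by dominated convergence. Here the classical $P''$ equals $\delta^{-1}(P-G)+\tfrac32(u')^2$, which lies in $L^1_{\mathrm{loc}}$ by Lemma \ref{lem:reggen}. Hence $P''$ coincides with this $L^1_{\mathrm{loc}}$ function, and the integrated identity holds in $\mathcal{D}'$. The statement in $\mathcal{D}'(\mathbb{R}^2)$ follows by the change of variables $\xi=x-st$ applied to test functions of $(x,t)$.

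\textbf{Main obstacle.} The delicate point is the cancellation of the boundary term $P'(\pm\epsilon)\varphi(\pm\epsilon)$, that is, the absence of a Dirac mass. This is precisely the content of Lemma \ref{lem:fluxg}: $P'$ is continuous with $P'(0)=0$, the blow-up of $\eta'$ being compensated by the simple zero of $(1+\eta)^3-C^2$.

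A secondary check is that the equality of the two constants $G$ on either side is genuinely guaranteed by the Rankine–Hugoniot relations (\ref{eq:equilib}). Were the constants different, $P''$ would acquire a jump in its regular part rather than a point mass, and the integrated identity would fail.
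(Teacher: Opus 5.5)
Your proposal is correct and follows essentially the same route as the paper. You get continuity and items (i)--(ii) from the construction and Lemma \ref{lem:reggen}, the mass identity from the algebraic relation $-s\eta+(1+\eta)u=C+s$, and the momentum identity by promoting the a.e. once-integrated balance $P-\delta P''+\tfrac32\delta(u')^2=G$ to $\mathcal{D}'(\mathbb{R})$ using the control of $P'$ at the origin from Lemma \ref{lem:fluxg}, then differentiating. The only differences are that you carry out explicitly the $\epsilon$-excision integration by parts which the paper packages into the last assertion of Lemma \ref{lem:fluxg}, and you check via (\ref{eq:k2}) that the integration constant is the same $G$ on both half-lines, which the paper leaves implicit in both half-orbits solving (\ref{eq:systemd4}).
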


\begin{proof}
Items (i)--(ii) follow from the construction and Lemma~\ref{lem:reggen}. The two half-orbits attain the asymptotic states $\eta_{\pm}$ as $\xi\to\pm\infty$ and the common value $\eta_{\ast}$ as $\xi\to0$. Since $\eta_{+}<\eta_{\ast}<\eta_1$, Lemma \ref{lem:reggen} produces the same limit $\eta_{\ast}$ from either side, so the glued profile $\eta$ is continuous across $\xi=0$, and the same lemma gives $\eta',u'\in L^2_{\mathrm{loc}}(\mathbb{R})$. 

Since $\eta$ and $u$ are continuous at $\xi=0$, the once-integrated mass relation $-s\eta+(1+\eta)u=A$ of (\ref{eq:systemd2}), with $A=s+C$ as in (\ref{eq:k1}), holds across the origin and hence on all of $\mathbb{R}$. Equivalently $u=s+C/(1+\eta)$, the first equation of (\ref{eq:systemd3}). Both $\eta$ and $(1+\eta)u$ are continuous across $\xi=0$, so $[(1+\eta)u]_0=0$, and differentiating the relation in $\mathcal{D}'(\mathbb{R})$ produces no Dirac mass at the origin:
$$
-s\eta'+\left[(1+\eta)u\right]'=0\qquad\text{in }\mathcal{D}'(\mathbb{R})\ ,
$$
which is the first identity of (\ref{eq:weakprofileg}).

On $\mathbb{R}\setminus\{0\}$ each half-orbit solves (\ref{eq:systemd3}) classically, so the once-integrated balance
(\ref{eq:starfluxg}) holds pointwise almost everywhere on $\mathbb{R}$. Set
$$
\Pi:=P-\delta P''+\tfrac32\delta(u')^2\ ,
$$
so that $\Pi=G$ a.e. By Lemma~\ref{lem:fluxg}, $P\in C^1(\mathbb{R})$ and $P''\in L^1_{\mathrm{loc}}(\mathbb{R})$ with no singular part, and since $(u')^2\in L^1_{\mathrm{loc}}(\mathbb{R})$ as well, $\Pi\in L^1_{\mathrm{loc}}$. Two $L^1_{\mathrm{loc}}$ functions that agree almost everywhere define the same distribution, so pairing with an arbitrary $\phi\in\mathcal{D}(\mathbb{R})$ promotes the almost-everywhere equality to an identity
in $\mathcal{D}'(\mathbb{R})$,
$$
\left\langle\,\Pi-G,\ \phi\,\right\rangle
=\int_{\mathbb{R}}\left(P-\delta P''+\tfrac32\delta(u')^2-G\right)\,\phi~d\xi=0\ ,
$$
that is, $\Pi\equiv G$ in $\mathcal{D}'(\mathbb{R})$ and not merely on each half-line separately. Differentiating this distributional identity gives
$$
\Pi'=0\qquad\text{in }\mathcal{D}'(\mathbb{R})\ ,
$$
the second identity of (\ref{eq:weakprofileg}). Note that no boundary term $[\Pi]_0\,\delta_0$ arises at the origin, consistently with the continuity of $\Pi$ there ($[\Pi]_0=0$ by Lemma~\ref{lem:fluxg}). 

Together with the mass equation, (\ref{eq:weakprofileg}) is exactly (\ref{eq:bous}) under the traveling wave {\em ansatz}, and the proof is complete.
\end{proof}

\begin{remark}\label{rem:canon}
The cusped soliton of homoclinic-type orbits (both half-orbits limiting on the same equilibrium $\eta_{+}$) is glued at the same $\eta_{\ast}$ and is, by similar arguments, a weak solution. The rigorous theory does not distinguish it from the shock, so the failure to observe cuspons in numerical simulations (see Section \ref{sec:numericalexp}) is a matter of dynamical stability, not existence.
\end{remark}

\section{Energy approximation}\label{sec:conservation}

In this section we show that solutions of system (\ref{eq:bous2}) have energy that approximates the corresponding energy of system (\ref{eq:sw1}) as $\delta\to 0$.  

Let $(\eta,u)$ be a solution of (\ref{eq:bous2}) with initial conditions $\eta(x,0)=H(x)$ and $u(x,0)=U(x)$. We observe, that the second equation of system (\ref{eq:bous2}), after rearranging its terms, can be written as
\begin{equation}
\mathcal{D}_t\, L\, u+gL\,\eta_x=0\ ,
\end{equation}
where $L=I-\delta \partial_x^2$ is a BBM-type smoothing operator, and $\mathcal{D}_t =\partial_t+u\partial_x$ is the usual material derivative operator.

For the operator product $\mathcal{D}_t L$, it can be verified that 
\begin{equation}
\mathcal{D}_t Lv=L \mathcal{D}_t v+\delta[\mathcal{D}_t,-\partial_x^2]v\ ,
\end{equation}
where $[A,B]=AB-BA$ is the commutator operator. Furthermore, it holds true that
\begin{equation}
[\mathcal{D}_t,-\partial_x^2]u=3u_xu_{xx}=\frac{3}{2}\left(u^2_x\right)_x\ .
\end{equation}
Thus, system (\ref{eq:bous}) is written in the following form as a perturbation of the classical shallow water equations:
\begin{equation}\label{eq:systemd5}
\begin{aligned}
&\eta_t+[(D+\eta)u]_x=0\ ,\\
&u_t+g\eta_x+\tfrac{1}{2}(u^2)_x + \frac{3}{2}\delta \left(L^{-1} u_x^2\right)_x=0\ .
\end{aligned}
\end{equation}
If we multiply the first equation of (\ref{eq:systemd5}) by $\eta + \tfrac{1}{2}u^2$, the second by $(D + \eta)u$, we obtain the energy balance equation
\begin{equation}\label{eq:energybal}
\mathcal{E}_t+\mathcal{L}_x=-\frac{3}{2}\delta \left(L^{-1} u_x^2\right)_x(D+\eta)u\ ,
\end{equation}
where 
$$\mathcal{E}=g\eta^2+ (D+\eta) u^2\quad \text{and}\quad \mathcal{L}=\tfrac{1}{2}(D+\eta)u^3+g(D+\eta)\eta u\ .$$

For smooth solutions that decay at infinity with an appropriate rate, we multiply the first equation of (\ref{eq:systemd5}) by $\eta + \tfrac{1}{2}u^2$, the second by $(D + \eta)u$, and integrate over $\mathbb{R}$. This yields
\begin{equation}\label{eq:energy1}
\frac{d}{dt}E(t) = -\frac{3}{2}\delta \int_{-\infty}^{+\infty} (\partial_x L^{-1} u_x^2)(D+\eta)u~dx\ ,
\end{equation}
where
$$E(t)=\tfrac{1}{2}\int_{-\infty}^{+\infty}[ g\eta^2+ (D+\eta) u^2]~dx\ ,$$
is the functional of the total energy of (\ref{eq:sw1}). Therefore, integration in time yields
\begin{equation}\label{eq:energyd1}
E(t)=E(0)-\frac{3}{2}\delta \int_0^t \int_{-\infty}^{+\infty} (\partial_x L^{-1} u_x^2)(D+\eta)u~dx~d\tau\ .
\end{equation}

If $(\tilde{\eta},\tilde{u})$ is a solution of (\ref{eq:sw1}) with the same initial conditions as before, i.e. $\tilde{\eta}(x,0)=H(x)$ and $\tilde{u}(x,0)=U(x)$, then the energy of this solution will be preserved as
\begin{equation}\label{eq:energyd2}\tilde{E}(t)=\int_{-\infty}^{+\infty}[ g\tilde{\eta}^2+ (D+\tilde{\eta}) \tilde{u}^2]~dx= E(0)\ .
\end{equation}

Before estimating the difference between the two energies, we provide an estimate for the operator $L^{-1}$:
\begin{lemma}\label{lem:kernel}
The operator $L^{-1}=(I-\delta\partial_x^2)^{-1}$ on $\R$ is convolution with the Green's function $G_\delta(x)=\tfrac{1}{2\sqrt\delta}\,e^{-|x|/\sqrt\delta}$, so that $\partial_x L^{-1}f=G_\delta'*f$ with
$$
G_\delta'(x)=-\frac{1}{2\delta}\,\mathrm{sgn}(x)\,e^{-|x|/\sqrt\delta}\ ,
\qquad
\|G_\delta'\|_{L^1}=\frac{1}{\sqrt\delta}\ .
$$
In particular $\partial_x L^{-1}$ is bounded on $L^1(\R)$, with $\|\partial_x L^{-1}f\|_{L^1}\le\delta^{-1/2}\,\|f\|_{L^1}$.
\end{lemma}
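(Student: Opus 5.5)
The plan is to verify directly that $G_\delta$ is the fundamental solution of $L = I-\delta\partial_x^2$, then differentiate and compute norms. I work on $L^1(\R)$, or on tempered distributions. The first choice is between Fourier analysis and a direct check.

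\textbf{Fourier route.} The symbol of $L$ is $1+\delta k^2$, which is bounded away from zero. So $L^{-1}$ is the Fourier multiplier $(1+\delta k^2)^{-1}$. The Fourier transform of $\tfrac{1}{2a}e^{-|x|/a}$ is $(1+a^2k^2)^{-1}$, and setting $a=\sqrt\delta$ gives $L^{-1}f=G_\delta*f$.

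\textbf{Direct check.} Alternatively, compute distributionally. Away from the origin, $G_\delta''=G_\delta/\delta$. The first derivative has a jump at $0$:
$$[G_\delta']_0=-\frac{1}{2\delta}-\frac{1}{2\delta}=-\frac1\delta.$$
Hence $G_\delta-\delta G_\delta''=\delta_0$. Uniqueness of the inverse then follows from injectivity of $L$ on tempered distributions, since its symbol never vanishes.

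Next, for $x\neq0$,
$$G_\delta'(x)=-\frac{1}{2\sqrt\delta}\cdot\frac{1}{\sqrt\delta}\,\mathrm{sgn}(x)\,e^{-|x|/\sqrt\delta},$$
which is the stated formula. Since $G_\delta$ is continuous, with no jump, its distributional derivative is exactly this $L^1$ function. Therefore $\partial_x(G_\delta*f)=G_\delta'*f$ for $f\in L^1$, justified by differentiating the convolution in the sense of distributions.

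The norm is
$$\|G_\delta'\|_{L^1}=\frac{1}{2\delta}\cdot 2\int_0^\infty e^{-x/\sqrt\delta}\,dx=\frac{1}{2\delta}\cdot 2\sqrt\delta=\frac{1}{\sqrt\delta}.$$
Young's inequality $\|g*f\|_{L^1}\le\|g\|_{L^1}\|f\|_{L^1}$ then gives the bound $\|\partial_xL^{-1}f\|_{L^1}\le\delta^{-1/2}\|f\|_{L^1}$.

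\textbf{Main obstacle.} The only point needing care is the identification of $\partial_x L^{-1}$ with convolution by $G_\delta'$ for merely $L^1$ data. I would handle this by density. First establish the identity for $f\in C_c^\infty$, where everything is classical. Then extend by the $L^1$ boundedness just proved.
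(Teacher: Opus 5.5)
Your proposal is correct and follows essentially the same route as the paper: identify $L^{-1}$ through the symbol $(1+\delta k^2)^{-1}$ and its inverse Fourier transform $G_\delta$, differentiate, compute $\|G_\delta'\|_{L^1}=\delta^{-1/2}$, and conclude with Young's inequality. Your extra steps are correct refinements that the paper leaves implicit: the distributional check $G_\delta-\delta G_\delta''=\delta_0$ via the jump $-1/\delta$ in $G_\delta'$, the restriction to tempered distributions for uniqueness, and the density argument for $L^1$ data.
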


\begin{proof}
The symbol of $L^{-1}$ is $\widehat{G_\delta}(k)=(1+\delta k^2)^{-1}$, whose inverse Fourier transform is $G_\delta$ (and $\int_\R G_\delta=1$). Differentiating gives $G_\delta'$ as stated, and $\|G_\delta'\|_{L^1}=\tfrac{1}{2\delta}\int_\R e^{-|x|/\sqrt\delta}~dx = \tfrac{1}{2\delta}\cdot2\sqrt\delta=\delta^{-1/2}$. Young's inequality for convolutions yields $\|G_\delta'*f\|_{L^1}\le\|G_\delta'\|_{L^1}\|f\|_{L^1}$, and the mapping bound follows.
\end{proof}

Now we can estimate the difference of the two energies, and precisely, that $|E(t)-\tilde{E}(t)| = O(\sqrt{\delta}\, t)$.

\begin{proposition}\label{prop:energy}
Let $(\eta,u)$ solve (\ref{eq:bous2}) on $[0,T]$ with $u_x(\cdot,t)\in L^2(\R)$ and $(D+\eta)(\cdot,t)\,u(\cdot,t)\in L^\infty(\R)$ for each $t\in[0,T]$, and let $(\tilde\eta,\tilde u)$ solve (\ref{eq:sw1}) with the same initial data. Then, for all $t\in[0,T]$,
$$
|E(t)-\tilde E(t)|\ \le\ \frac32\sqrt\delta\int_0^t
\|u_x(\tau)\|_{L^2}^2\,\big\|(D+\eta)(\tau)\,u(\tau)\big\|_{L^\infty}\,d\tau\ .
$$
\end{proposition}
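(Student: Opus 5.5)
The plan is to combine the integrated energy identity (\ref{eq:energyd1}) with the conservation statement (\ref{eq:energyd2}) for the shallow water solution, and then control the remainder through the $L^1$ mapping bound of Lemma~\ref{lem:kernel}. Since $(\tilde\eta,\tilde u)$ has the same initial data, $\tilde E(t)=E(0)$. Subtracting this from (\ref{eq:energyd1}) gives
\[
E(t)-\tilde E(t)=-\frac32\,\delta\int_0^t\int_{\R}\big(\partial_xL^{-1}u_x^2\big)(D+\eta)u\,dx\,d\tau .
\]
The factor $\tfrac12$ in $E$ should be matched consistently in $\tilde E$. I will read (\ref{eq:energyd2}) with the same normalisation as $E$, since otherwise the two are not comparable at $t=0$.

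The whole estimate then reduces to bounding the inner spatial integral at each fixed $\tau$. I apply H\"older's inequality with the pairing $L^1$--$L^\infty$:
\[
\Big|\int_{\R}\big(\partial_xL^{-1}u_x^2\big)(D+\eta)u\,dx\Big|\le \|\partial_xL^{-1}(u_x^2)\|_{L^1}\,\|(D+\eta)u\|_{L^\infty}.
\]
The hypothesis $u_x(\cdot,\tau)\in L^2$ gives $u_x^2\in L^1$ with $\|u_x^2\|_{L^1}=\|u_x\|_{L^2}^2$. Lemma~\ref{lem:kernel} then gives
\[
\|\partial_xL^{-1}(u_x^2)\|_{L^1}\le\delta^{-1/2}\|u_x\|_{L^2}^2 .
\]
Multiplying by $\tfrac32\delta$ turns $\delta\cdot\delta^{-1/2}$ into $\sqrt\delta$. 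Bringing the absolute value inside the time integral then yields exactly the stated bound.

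The main obstacle is justifying (\ref{eq:energyd1}) under only the stated hypotheses, since it was derived for smooth, decaying solutions. Weakly singular profiles have $u_x\sim|\xi|^{-1/3}$, which is still in $L^2$, so the right-hand side stays meaningful. I would treat (\ref{eq:energyd1}) as the energy identity satisfied by the solutions under consideration, as the paper does, or obtain it as a limit of the smooth case. In either case the convolution $G_\delta'*u_x^2$ is a bounded, integrable function, so the flux term $\mathcal L_x$ integrates to zero under decay at infinity. Measurability and integrability in $\tau$ of the integrand are implicit in the finiteness of the right-hand side. If that right-hand side is infinite, the inequality holds trivially.
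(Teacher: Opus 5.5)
Your proposal is correct and follows the paper's proof essentially verbatim. You subtract (\ref{eq:energyd2}) from (\ref{eq:energyd1}), apply H\"older in the $L^1$--$L^\infty$ pairing, invoke Lemma~\ref{lem:kernel} with $\|u_x^2\|_{L^1}=\|u_x\|_{L^2}^2$, and move the absolute value inside the time integral. Reading (\ref{eq:energyd2}) with the factor $\tfrac12$ is the right choice. Note that, like the paper, you rely on $\tilde E(t)=E(0)$, which presupposes that the shallow water solution stays smooth, since shocks of (\ref{eq:sw1}) dissipate energy.
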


\begin{proof}
Subtracting (\ref{eq:energyd2}) from (\ref{eq:energyd1}),
$$
E(t)-\tilde E(t)=-\frac32\delta\int_0^t\!\!\int_\R
\left(\partial_x L^{-1}u_x^2\right)(D+\eta)u\,dx\,d\tau\ .
$$
By H\"older's inequality in the $L^1$--$L^\infty$ pairing and then Lemma
\ref{lem:kernel},
$$
\Big|\int_\R\left(\partial_x L^{-1}u_x^2\right)(D+\eta)u\,dx\Big|
\le\big\|\partial_x L^{-1}u_x^2\big\|_{L^1}\,\|(D+\eta)u\|_{L^\infty}
\le\delta^{-1/2}\,\|u_x^2\|_{L^1}\,\|(D+\eta)u\|_{L^\infty}\ .
$$
Since $\|u_x^2\|_{L^1}=\|u_x\|_{L^2}^2$, taking absolute values inside the time
integral gives the stated bound.
\end{proof}

The bound of Proposition \ref{prop:energy} is finite for weakly singular solutions because $u_x\sim|\xi|^{-1/3}$ gives
$u_x^2\sim|\xi|^{-2/3}\in L^1$, while $\|(D+\eta)u\|_{L^\infty}<\infty$ because $\eta$
and $u$ are bounded. 
Furthermore, numerical experiments presented in Section \ref{sec:numericalexp} confirm that energy dissipation occurs in the presence of weakly singular fronts, in a manner consistent with the nonlinear shallow water equations (\ref{eq:sw1}), while overall the regularized system retains an energy comparable to that of the corresponding shallow water solutions.

\section{Finite volume methods}\label{sec:finitevols}

In this section we generalize the finite volume methods introduced in \cite{dkm2011,dkm2013,DCMM2013} to the high-order system (\ref{eq:bous}). For notation convenience, we write system (\ref{eq:bous}) in the conservative form:
\begin{equation}\label{E2.2}
(I-\delta\, \partial_x^2){\bw}_t+\left[{\bF}({\bw})\right]_x-\delta\,\left[{\bG}({\bw})\right]_x=0, 
\end{equation}
where ${\bw}=(\eta,u)^T$ and
$${\bF}({\bw})=\begin{pmatrix}(D+\eta)u\\
\eta+\frac{1}{2}u^2\end{pmatrix},\quad {\bG}(\bw)=\begin{pmatrix} 0\\
\eta_{xx}+uu_{xx}-\tfrac{1}{2}u_x^2
\end{pmatrix}\ ,$$
are the hyperbolic and dispersive flux functions, respectively.

For the discretization of the domain, we consider a uniform partition $\mathcal{T}= \{x_i, \ i\in\mathbb{Z} \}$ of $\mathbb{R}$ into cells $C_i= (\xim,\xip)$ where $x_i = (x_{\ip}+x_{\im})/2$. Let $\Delta x_i= \xip-\xim$ be the length of  the cell $C_i$, and  $\Delta x_{\ip}=x_{i+1}-x_i$, where $\Delta x_i=\Delta x_{\ip}=\Delta x$ for all $i\in\mathbb{Z}$. Although here we consider uniform grids, the method can be extended to nonuniform or even moving grids for better resolution of the singular wave front \cite{KD2017,KDMS2019}.

We approximate the solution $\bw$ using a piecewise constant function $\bW(x,t)=\sum_{i\in\mathbb{Z}} \bW_i(t)\chi_{C_i}(x)$, where $\chi_{C_i}$ is the characteristic function of the cell $C_i$, and $\bW_i$ is the cell-average approximation of $\bW$ on $C_i$. 
Thus, the vector $\bW$ comprises the cell-average approximations of $\eta$ and $u$. We denote its entries as $\bW_i=(\bar{\eta}_i,\bar{u}_i)^T$.

The semidiscrete approximation of $(\ref{E2.2})$ over the cell $C_i$ is defined as
\begin{equation}
(I-\delta \bD^2)\bW_t+\frac{1}{\Delta x}\left[\bF(\bW(x_{\ip},t))-\bF(\bW(x_{\im},t)) \right]-\frac{\delta}{\Delta x}\left[\bG(\bW(x_{\ip},t))-\bG(\bW(x_{\im},t))\right]=0\ ,
\end{equation}
where $\bD^2$ is the second order finite difference operator of the second derivative defined as 
$$\bD^2_i\bar{w}=\frac{\bar{w}_{i+1}-2\bar{w}_i+\bar{w}_{i-1}}{\Delta x^2}\ .$$

Due to the discontinuity of the discrete solution $\bW$ at the cell interfaces $x_{\ip}$, $i\in\mathbb{Z}$, we approximate the flux at the cell interfaces by a numerical flux function 
$$\bF(\bW(x_{\ip},t))\approx \mathcal{F}_{\ip}(\bW^L_{\ip},\bW^R_{\ip})\ ,$$
where $\bW^{L,R}_{\ip}$ are reconstructions of the variable $\bW$ to the left and right sides of each cell interface, respectively. Such reconstructions are described later in this section.

In order to define the numerical flux function as introduce the Jacobian matrix of the flux function $\bF$, which is 
$$
\bA=\begin{pmatrix}
u & D+\eta\\ 
1 & u 
\end{pmatrix}\ ,
$$ 
with eigenvalues  $\lambda_i=u\pm \sqrt{1+\eta}$, $i=1,2$. 

Then, we  approximate the flux function $\bF$ by one of the following numerical flux functions:
\begin{align}
& \mathcal{F}^{KT}(\bw,\bv) = \frac{1}{2}\left\{\left[ \bF(\bv) + \bF(\bw)\right] - \mathcal{A}(\bw,\bv)\left[\bv-\bw\right]\right\} \label{KTFlux} \\
& \mathcal{F}^{CF}(\bw,\bv)  = \frac{1}{2}\left\{\left[ \bF(\bv) + \bF(\bw)\right] - \mathcal{B}(\bw,\bv)\left[\bF(\bv)-\bF(\bw)\right]\right\}. \label{CFFlux}
\end{align}

The flux \eqref{KTFlux} is a Lax-Friedrichs type flux, and the operator $\mathcal{A}$ is defined as 
\begin{equation}\label{KTA}
\mathcal{A}(\bw,\bv) = \max\left[\rho\left(\bA(\bw)\right), \rho\left(\bA(\bv)\right)\right], 
\end{equation}
where $\rho(\bA)$ is the spectral radius of the Jacobian matrix $\bA$, \cite{KT, NT}. 

For the \emph{characteristic} flux function (\ref{CFFlux}), the operator $\mathcal{B}$ is
\begin{equation}\label{CFA}
\mathcal{B}(\bw,\bv) = \sign\left(\bA\left(\frac{\bw+\bv}{2}\right)\right)\ ,
\end{equation}
and is similar to the upwind flux, \cite{Ghidaglia1996, Ghidaglia2001}.

For the approximation of dispersive flux $\bG(\bW(x_{\ip},t))$ we extend the average fluxes introduced in \cite{dkm2011}, to incorporate the new high-order terms. Specifically, we consider the average flux:
\begin{equation}\label{eq:dispt1}
\mathcal{G}_{\ip}(\bW)=\frac{\bD^2_{i+1}\bar{\eta}+\bD^2_{i}\bar{\eta}}{2}+\left(\frac{\bD^1_{i+1}\bar{u}+\bD^1_i\bar{u}}{2}\right)^2- \bar{u}_{i+1}\frac{\bD^2_{i+1}\bar{u}+\bD^2_{i}\bar{u}}{2}\ ,
\end{equation}
where
$$\bD^2_{i}\bar{w}=\frac{\bar{w}_{i+1}-2\bar{w}_i+\bar{w}_{i-1}}{\Delta x^2}\quad 
\text{and}\quad \bD^1_i\bar{w}=\frac{\bar{w}_{i+1}-\bar{w}_i}{\Delta x}\ .$$

Employing the numerical fluxes we can write the discrete system as a system of equations where for each $i\in \mathbf{Z}$:
\begin{equation}\label{eq:FV1}
\begin{aligned}
(I-\delta \bD^2_i)\frac{d}{dt}\bW_i+&\frac{1}{\Delta x}\left[\mathcal{F}_{\ip}(\bW_{\ip}^L,\bW_{\ip}^R)-\mathcal{F}_{\im}(\bW_{\im}^L,\bW_{\im}^R) \right]\\
&-\frac{\delta}{\Delta x}\left[\mathcal{G}_{\ip}(\bW)-\mathcal{G}_{\im}(\bW)\right]=0\ ,
\end{aligned}
\end{equation}
where $\bW_i(0)=(\eta(x_i,0),u(x_i,0))$. 

In practice the system must be posed on a finite interval $(-\ell,\ell)$ rather than the whole real line, which requires the imposition of boundary conditions such as reflective or Dirichlet conditions \cite{MSM2017}. As no well-posedness theory is available for the present model on bounded domains, we work primarily with periodic boundary conditions. For the weakly singular shock waves, however, we additionally employ Dirichlet boundary
conditions, in order to verify the convergence of the method and to compare with the known convergence rates for discontinuous shock waves. 

To achieve second order accuracy in space, the cell averaged solution is approximated by a piecewise polynomial function. The core strategy here involves constructing second-order approximations $\bW_{\ip}^L,\bW_{\ip}^R$ to $\bW(\xip,t)$ based on the computed cell averages $\bW_i$.
The most straightforward approach for assigning the values $\bW_{\ip}^L,\bW_{\ip}^R$ is the piecewise constant approximation within each cell, as given by:
\begin{equation}
\bW_{\ip}^L = \bW_i, \quad \bW_{\ip}^R = \bW_{i+1},
\end{equation}
which leads to a semidiscrete finite volume scheme that is formally first-order accurate in space. To enhance accuracy, we apply classical MUSCL-type (TVD2) linear reconstruction \cite{Leer1979,Sweby1984}.

Common choices for the limiter function include the MinMod, Van Leer, Monotonized, and Van Albada limiters. These limiters, with the exception of MinMod, generally provide sharper resolution of discontinuities by avoiding excessive damping of slopes near steep gradients. The TVD2 method achieves second-order accuracy, except at local extrema, where it reduces to first-order accuracy. As an alternative, we consider the UNO2 reconstruction method \cite{HaOs}, which results in second-order accuracy even at local extrema. 

To discretize the time domain, we begin with an initial condition $\bW(t^0)$. Subsequently, we invert the discrete elliptic operator $I - \delta \bD^2$ to obtain a system of ordinary differential equations. The discrete elliptic operator with periodic boundary conditions can be inverted efficiently by means of Fast Fourier Transform techniques, while for non-periodic boundary conditions one can use the SMW iteration proposed in \cite{M2024}. Inverting the regularization operator leads to a system that is not stiff. To solve this system, we employ the optimal third-order Strong Stability Preserving (SSP) Runge–Kutta method, $\text{SSPRK}(3,3)$, as specified in \cite{GKS2011} with a uniform time step. In this context, we denote the uniform time step as $\Delta t=t^{j+1}-t^j$. Since the resulting system of ordinary differential equations is not stiff, we do not enforce stringent constraints on $\Delta t$. Nonetheless, we typically set $\Delta t = \Delta x/10$ (unless otherwise is stated) to reduce errors introduced by the time discretization. For further details, please refer to \cite{dkm2011}. 

\section{Numerical experiments}\label{sec:numericalexp}

In this section we explore the approximation of weakly singular shock wave solutions using the aforementioned finite volume methods. We also compare the corresponding numerical solutions of the shallow water equations.

\subsection{Generation of weakly singular shock waves}\label{sec:wekshoK}

To generate weakly singular shock waves as those in Figure \ref{fig:rshock0}(a) we consider the initial condition
\begin{equation}\label{eq:initconddb}
\eta(x,0)=\frac{\tanh(\kappa(x+\zeta))-\tanh(\kappa(x-\zeta))}{4}, \qquad u(x,0)=0 \ ,
\end{equation}
in the interval $[-\ell,\ell]$ with periodic boundary conditions. Such initial conditions are commonly used to describe the dam-break problem in the domain $[0,\ell]$. For this experiment we take $\kappa=0.1$, $\zeta=250$ and $\ell=1000$. We integrate system (\ref{eq:bous1}) using the fully discrete scheme of the previous section for $t\in (0,T]$ with $T=600$ and with $\Delta x=0.05$, $\Delta t=\Delta x/2$.

For the initial condition (\ref{eq:initconddb}), two symmetric weakly singular shocks and two rarefaction waves are generated, traveling in opposite directions. The rarefaction waves collide and cancel each other, leaving behind two weakly singular waves that connect the two different states: $\eta_0=\eta_{+}=0$ and $\eta_1=\eta_{-}\approx 0.237549$. While $u_{+}=0$, $u_{-}=0.224585$.  
The speed of this weakly singular shock is $s\approx 1.17$. This can be found by solving the relations (\ref{eq:criticals}) with respect $s$  for a critical point $\eta_1$. Specifically, its formula is 
$$s^2=2\frac{1 + 2 \eta_1 + \eta_1^2}{2 + \eta_1}\ .$$

\begin{figure}[t]
\centering
\includegraphics[width=\columnwidth]{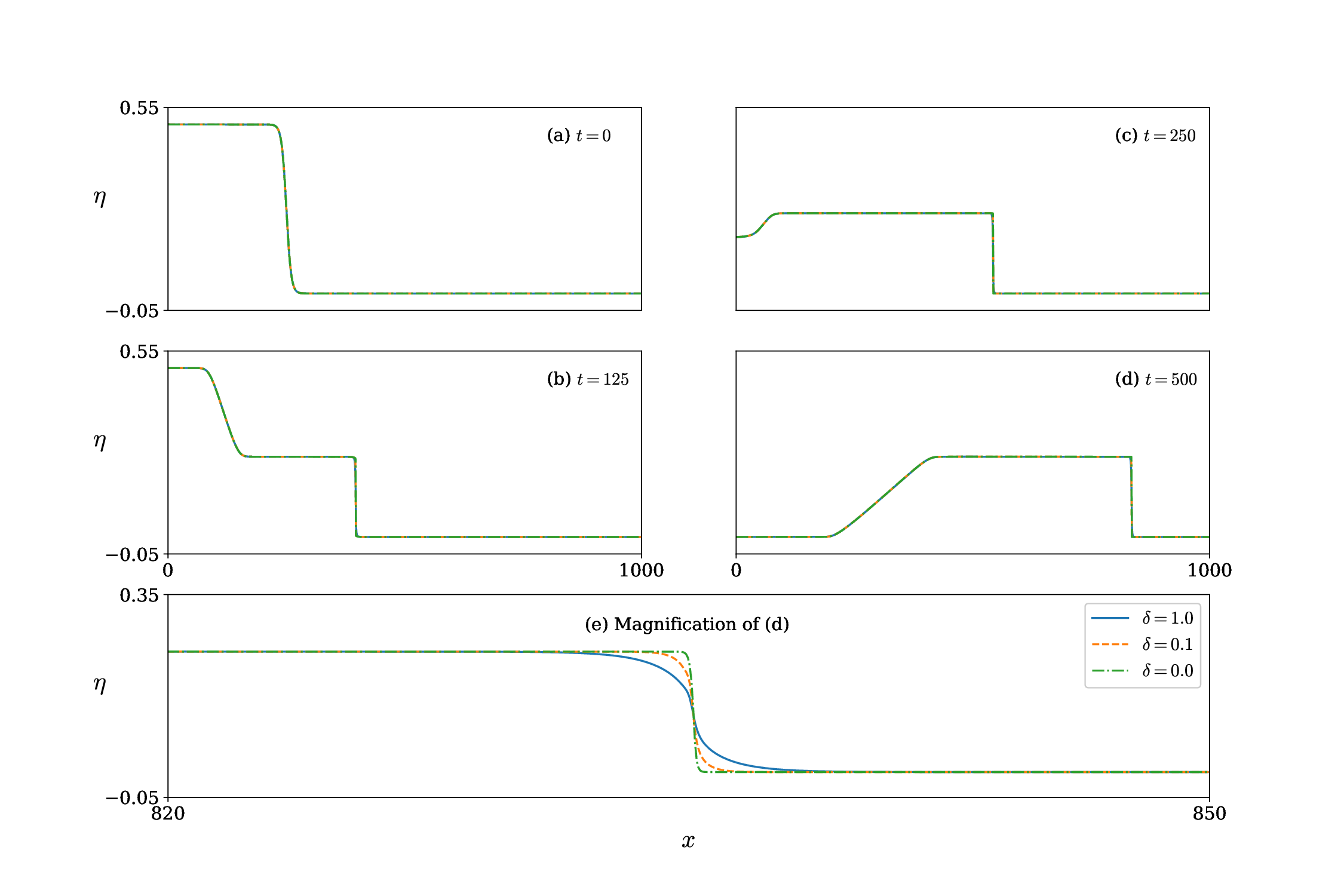}
\caption{Generation of weakly singular shock waves for $\delta=1$, $0.1$ and comparison with classical shock wave of $\delta=0$ using the Kurganov-Tadmor numerical flux with the MinMod limiter with the standard TVD2 reconstruction}
\label{fig:evolv1211}
\end{figure}

Figure \ref{fig:evolv1211} presents the numerical results for times $t=0,125,250$ and $500$ obtained using the Kurganov-Tadmor numerical flux with the MinMod limiter with the standard TVD2 reconstruction for $\delta=1$, $0.1$ and $0$. 

\begin{figure}[t]
\centering
\includegraphics[width=\columnwidth]{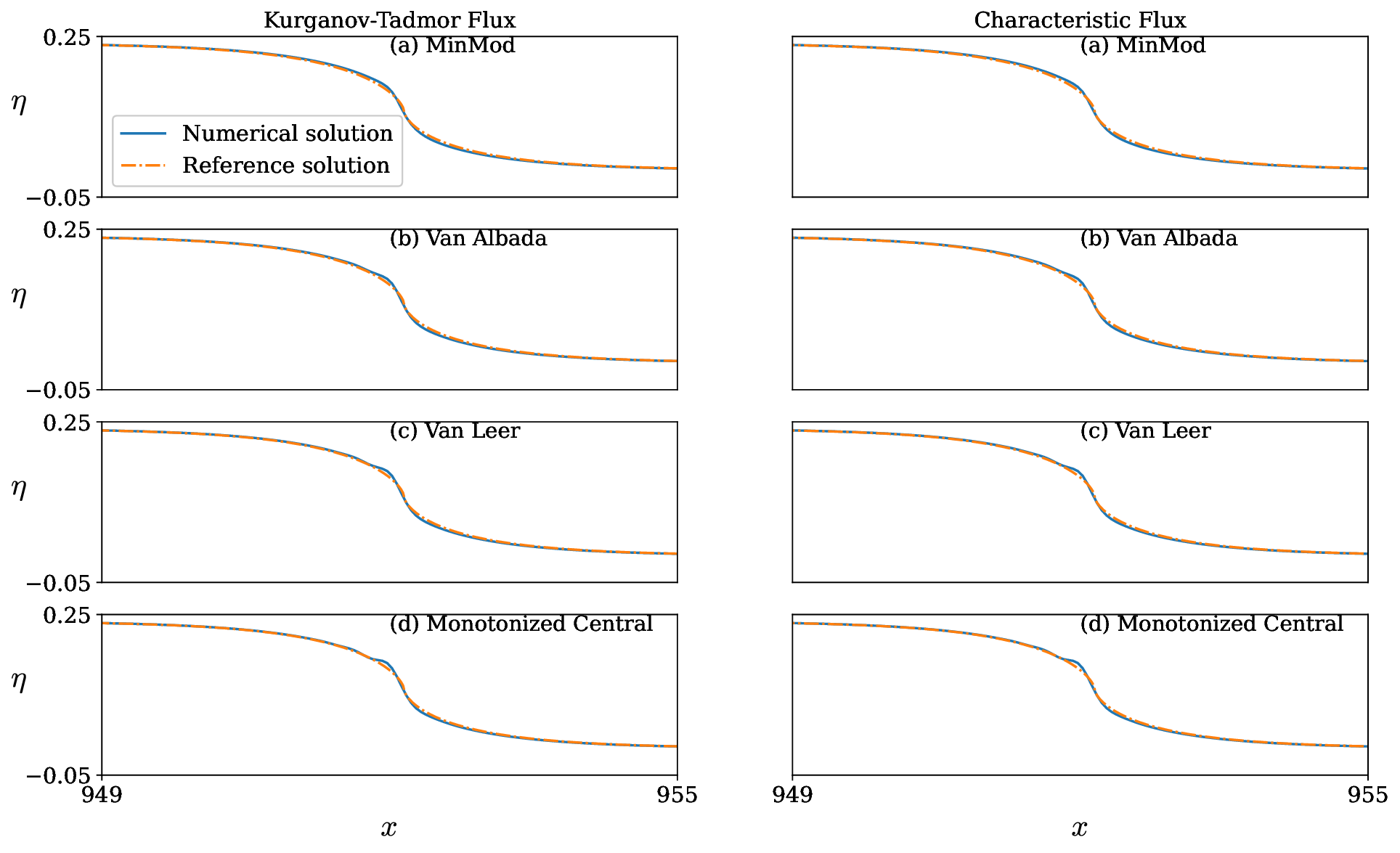}
\caption{Comparison between various limiters and numerical fluxes with TVD2 reconstruction for the approximation of the weakly singular front with $\delta=1$}
\label{fig:compfl}
\end{figure}

\begin{figure}[t]
\centering
\includegraphics[width=0.65\columnwidth]{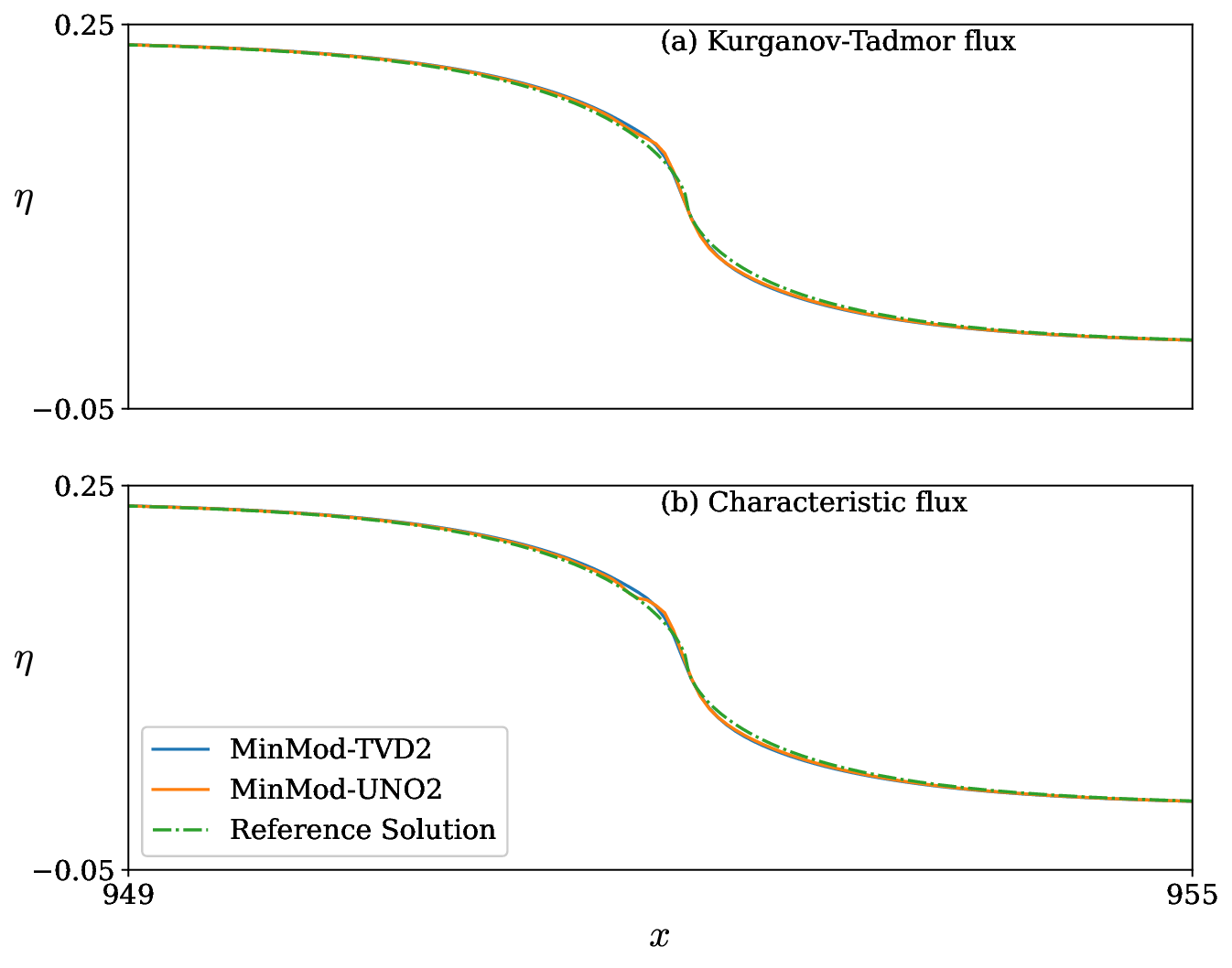}
\caption{Comparison between TVD2 and UNO2 reconstructions with the MinMod limiter and Kurganov-Tadmor flux}
\label{fig:compuno}
\end{figure}

Taking smaller values of $\delta$ we observed that as $\delta\to 0$ the solution converges to the corresponding solution of the shallow water equations. Also all the waves propagate with the same speed. The performance of the finite volume method is satisfactory and no spurious oscillations were observed.

Figure \ref{fig:compfl} presents a comparison of the performance of the Kurganov-Tadmor flux (\ref{KTFlux}) and Characteristic Flux (\ref{CFFlux})  with various limiters with TVD2 reconstruction for the approximation of the weakly singular wave front generated by the initial conditions (\ref{eq:initconddb}) at $t=600$. We also compare the various numerical solutions with a reference (numerical) solution generated by solving Equation (\ref{eq:systemd4}) using the Gauss-Newton approximation of the minimizer of the residual with the Levenberg-Marquardt modification. In the residual equation, the derivatives were evaluated using central finite-difference approximations (with the exception of the boundary cells where one-sided approximations were employed), and no additional handling of either the nonlinear terms or the singularity was applied. The final converged solution exhibited a discrete $L^2$-norm residual on the order of $10^{-5}$.

Both numerical fluxes yield nearly identical results, and no noticeable differences can be detected. Although the MinMod limiter provides the most accurate representation of the weakly singular front, this front is accompanied by low-amplitude oscillations. Note that the oscillations produced are smooth and not spurious but it is rather the approximated profile of the weakly singular shock. This effect is due to their high-resolution of the derivative.

Similarly, the use of UNO2 reconstruction yields comparable results. Figure \ref{fig:compuno} illustrates a comparison of the numerical solutions obtained using TVD2 and UNO2 reconstruction for the MinMod limiter. We note that although the two reconstruction methods produce very similar results, the weakly singular front in the TVD2 reconstruction exhibits smaller oscillations.

Figure \ref{fig:energy1} presents the computed energy $\mathcal{E}$ as a function of time. Note that the integral in the formula of the energy was evaluated using composite midpoint rule, while we only present the case with the TVD2 reconstruction with the MinMod limiter and the Kurganov-Tadmor flux. The other cases yield very similar results. We observe that the energy of the solution for both $\delta=1$ and $\delta=0.1$ approximates the energy of the solutions for $\delta=0$. The energy decreases for both values of $\delta$, but when $\delta=1$, the energy starts decreasing at an earlier time. 
\begin{figure}[t]
\centering
\includegraphics[width=0.8\columnwidth]{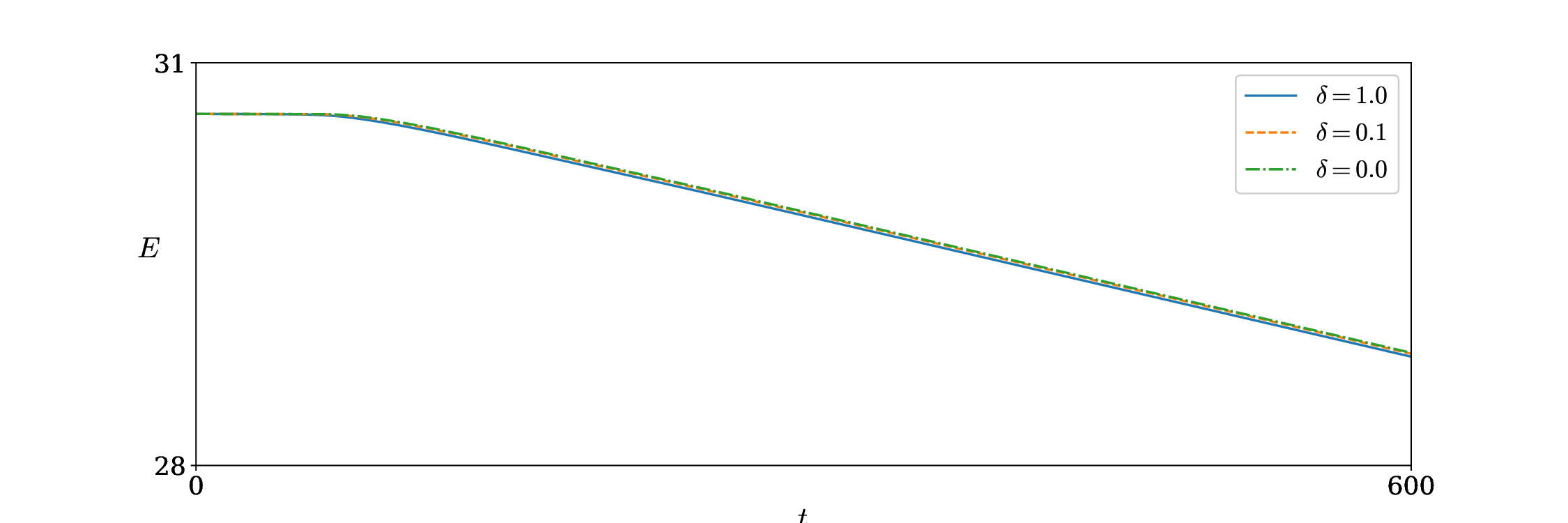}
\caption{Energy $E(t)$ of the solution as a function of time (TVD2 reconstruction with MinMod limiter and Kurganov-Tadmor flux)}
\label{fig:energy1}
\end{figure}

\subsection{Accuracy of the numerical method}\label{sec:accuracy}

The accuracy of this numerical scheme for approximating smooth solutions of similarly structured problems has already been investigated in \cite{DCMM2013,dkm2013,dkm2011}. In contrast, the present work is the first to assess its performance on weakly singular shock waves. Having shown that weakly singular shocks may arise from generic Riemann initial data with sufficiently large mass, we then analyze more closely how well the numerical method captures these solutions. For these tests, we consider as initial condition the numerically generated weakly singular shock wave and we imposed Dirichlet boundary conditions, even though there is currently no supporting theory that ensures well-posedness of the corresponding initial–boundary value problem. Specifically, guided by the structure of these waves, we prescribed $\eta(a,t)=\eta_{-}$, $\eta(b,t)=\eta_{+}$, $u(a,t)=u_{-}$, and $u(b,t)=u_{+}$, where $\eta_{\pm}$ and $u_{\pm}$ are introduced in Sections (\ref{sec:existence}) and (\ref{sec:gwsw}). 

In this study, we also utilize some error indicators. Let $(\tilde{\eta},\tilde{u})$ represent the translated reference cell-averages of the solution, and let $(H,U)$ denote the corresponding numerical approximations obtained through our method. We define the errors as follows:
$$E^1(w;t)=\sum_{i}|\tilde{w}_i-W_i|,\quad E^2(w;t)=\left(\sum_i|\tilde{w}_i-W_i|^2\right)^{1/2},\quad E^\infty(w;t)=\max_i|\tilde{w}_i-W_i|\ ,$$
where $w$ is either $\eta$ or $u$.

We first solve the initial-boundary value problem for $\delta=0.01$ in $[-100,100]$ using $\Delta x=0.1$ and $\Delta t=0.01$ up to $T=100$. As an initial condition, we used the numerically computed weakly singular shock wave translated such as the singularity is located at $x=-50$. This shock wave was obtained by solving Equation (\ref{eq:systemd4}) with $s = 1.17$. The Gauss–Newton method was used and the Levenberg–Marquardt modification was applied to minimize the residual as previously described. This process resulted in a discrete $L^2$-norm residual of order $O(10^{-5})$. The resulting shock wave profile was then mapped onto the finite volume grid using piecewise linear interpolation. This numerical solution served as the reference solution for our comparison. To compare the reference solution with numerical solution generated by the finite volume method at time $t$, we translated the reference solution horizontally by $s\cdot t$ units to the right extending and truncating the constant states appropriately. 

Figure \ref{fig:front} presents the $\eta$ component of the solution ($u$ is very similar and is omitted) for $t=0,25,50,75$ and $100$  along the magnification of shock fronts obtained using the Kurganov-Tadmor flux and TVD2 and UNO2 reconstruction with MinMod limiter. We observe that the front was stabilized very fast to a moving front while the numerical solution exhibits a steeper profile compared to the reference solution, with errors $E^1$, $E^2$, and $E^\infty$ all being of order $O(10^{-3})$. Furthermore, the front of the solution sheds a small amplitude wavelet that propagates to the left, approaching the left boundary at the top of the solution. In this simulation, the TVD2 reconstruction appears to resolve the weakly singular front more accurately than the UNO2 reconstruction. A magnification near the peak of the solution, highlighting the small-amplitude wavelet, is shown in Figure \ref{fig:wavlet}. This indicates that the initial condition does not correspond to an exact discrete traveling wave.

\begin{figure}[t]
\centering
\includegraphics[width=\columnwidth]{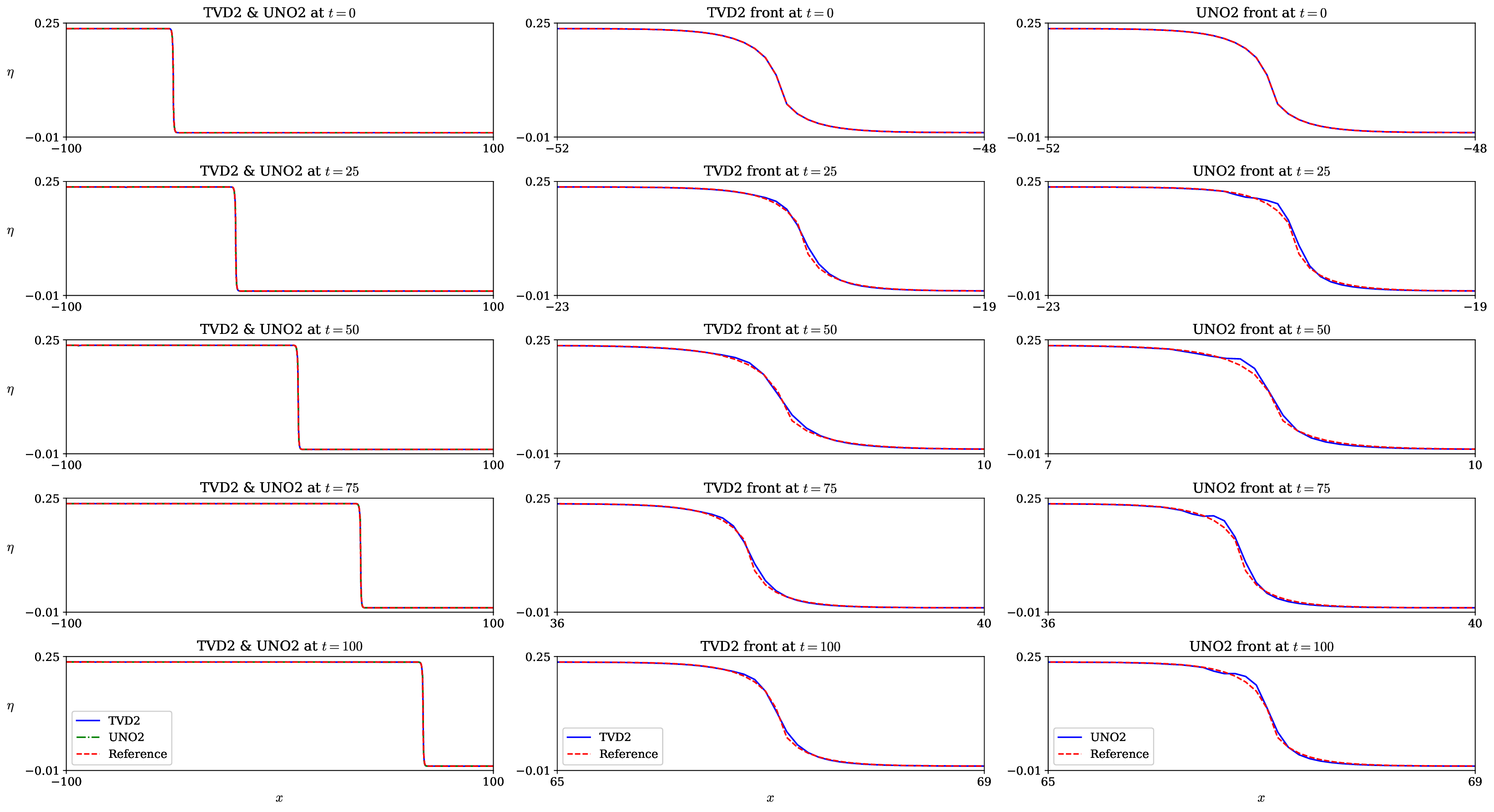}
\caption{Propagation of a weakly singular solitary wave with $s=1.17$, $\delta=0.01$. (TVD2 and UNO2 reconstruction with MinMod limiter and Kurganov-Tadmor flux, $\Delta x=0.1$, $\Delta t=0.01$)}
\label{fig:front}
\end{figure}

\begin{figure}[t]
\centering
\includegraphics[width=0.5\columnwidth]{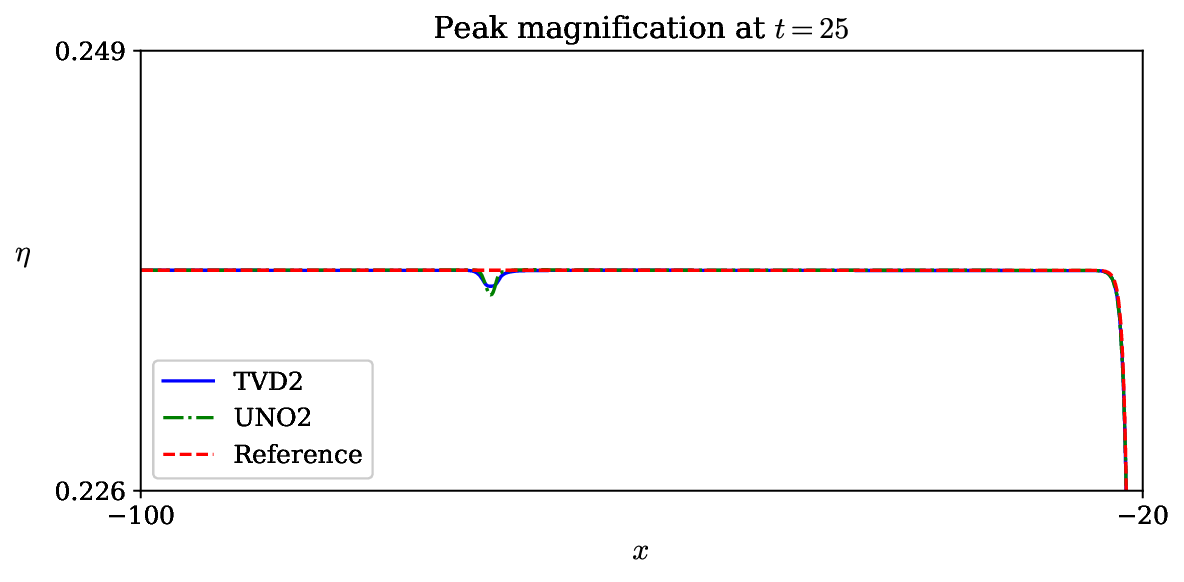}
\caption{Magnification of Figure \ref{fig:front} at $t=25$}
\label{fig:wavlet}
\end{figure}

As $\Delta x$ becomes smaller, the discrepancy between the two solutions and the small sheared wave diminishes. It is worth noting that these profiles also align with the profiles observed in Section \ref{sec:wekshoK}, where the weakly singular shock wave was generated from a generic initial condition. 

Since there is no exact formula for weakly singular shock waves, and the reference solution is also a numerical solution, determining the precise convergence rate is challenging. However, to further assess the accuracy of the method, we evaluated its convergence as $\Delta x$ decreased by simulating the propagation of the same weak-singular shock wave in the interval $[-100, 100]$ up to time $T$ for a decreasing sequence of values of $\Delta x_i$. To ensure that the errors from the time discretization are negligible, we used $\Delta t_i=\Delta x_i/10$. We computed the errors $E^1_i$, $E^2_i$, and $E^{\infty}_i$ for the corresponding mesh length $\Delta x_i$. Subsequently, we determined the experimental order of convergence, which is defined as 
$$
  \text{rate} = \frac{\log(E^j_i / E^j_{i+1})}{\log(\Delta x_i / \Delta x_{i+1})}\ ,
$$
at $T=2$.

\begin{table}[t]
  \centering
  \begin{tabular}{ccccccr}
    \hline
    $\Delta x$  & $E^1(\eta;t=2)$ & $E^1(u;t=2)$ & rate $\eta$ & rate $u$  \\
    \hline
  0.1000 & $7.110840 \times 10^{-3}$ & $6.690155 \times 10^{-3}$ & -- & -- \\
  0.0500 & $5.346618 \times 10^{-3}$ & $4.965604 \times 10^{-3}$ & 0.41 & 0.43 \\
  0.0100 & $3.306735 \times 10^{-3}$ & $2.970371 \times 10^{-3}$ & 0.30 & 0.32 \\
  0.0050 & $2.202325 \times 10^{-3}$ & $1.972385 \times 10^{-3}$ & 0.59 & 0.59 \\
  0.0025 & $1.339119 \times 10^{-3}$ & $1.208850 \times 10^{-3}$ & 0.72 & 0.71 \\
    \hline
  \end{tabular}
  \caption{Errors and experimental order of convergence in discrete $L^1$ norm for a weakly singular solitary wave with $s=1.17$, $\delta=0.01$ (TVD2 reconstruction with MinMod limiter and Kurganov-Tadmor flux)}\label{tab:T1}
  \end{table}
\begin{table}[t]
  \centering
  \begin{tabular}{ccccccr}
    \hline
    $\Delta x$  & $E^2(\eta;t=2)$ & $E^2(u;t=2)$ & rate $\eta$ & rate $u$  \\
    \hline
  0.1000 & $6.709960 \times 10^{-3}$ & $6.354103 \times 10^{-3}$ & -- & -- \\
  0.0500 & $4.214414 \times 10^{-3}$ & $3.903600 \times 10^{-3}$ & 0.67 & 0.70 \\
  0.0100 & $2.519163 \times 10^{-3}$ & $2.336817 \times 10^{-3}$ & 0.32 & 0.32 \\
  0.0050 & $1.645623 \times 10^{-3}$ & $1.535363 \times 10^{-3}$ & 0.61 & 0.61 \\
  0.0025 & $9.699172 \times 10^{-4}$ & $9.095216 \times 10^{-4}$ & 0.76 & 0.76 \\
    \hline
  \end{tabular}
  \caption{Errors and experimental order of convergence in discrete $L^2$ norm for a weakly singular solitary wave with $s=1.17$, $\delta=0.01$ (TVD2 reconstruction with MinMod limiter and Kurganov-Tadmor flux)}\label{tab:T2}
  \end{table}
\begin{table}[t]
  \centering
  \begin{tabular}{ccccccr}
    \hline
    $\Delta x$  & $E^\infty(\eta;t=2)$ & $E^\infty(u;t=2)$ & rate $\eta$ & rate $u$  \\
    \hline
  0.1000 & $1.551467 \times 10^{-2}$ & $1.340331 \times 10^{-2}$ & -- & -- \\
  0.0500 & $7.594579 \times 10^{-3}$ & $9.928812 \times 10^{-3}$ & 1.03 & 0.43 \\
  0.0100 & $5.709704 \times 10^{-3}$ & $4.815639 \times 10^{-3}$ & 0.18 & 0.45 \\
  0.0050 & $4.018024 \times 10^{-3}$ & $3.549725 \times 10^{-3}$ & 0.51 & 0.44 \\
  0.0025 & $2.648714 \times 10^{-3}$ & $2.510502 \times 10^{-3}$ & 0.60 & 0.50 \\
    \hline
  \end{tabular}
  \caption{Errors and experimental order of convergence in discrete $L^\infty$ norm for a weakly singular solitary wave with $s=1.17$, $\delta=0.01$ (TVD2 reconstruction with MinMod limiter and Kurganov-Tadmor flux)}\label{tab:T3}
  \end{table}

\begin{table}[t]
  \centering
  \begin{tabular}{ccccccr}
    \hline
    $\Delta x$  & $E^1(\eta;t=2)$ & $E^1(u;t=2)$ & rate $\eta$ & rate $u$  \\
    \hline
  0.1000 & $8.243964 \times 10^{-3}$ & $8.127002 \times 10^{-3}$ & -- & -- \\
  0.0500 & $5.721676 \times 10^{-3}$ & $5.196879 \times 10^{-3}$ & 0.53 & 0.65 \\
  0.0100 & $2.412570 \times 10^{-3}$ & $2.153890 \times 10^{-3}$ & 0.54 & 0.55 \\
  0.0050 & $1.401646 \times 10^{-3}$ & $1.272022 \times 10^{-3}$ & 0.78 & 0.76 \\
  0.0025 & $7.131612 \times 10^{-4}$ & $6.667732 \times 10^{-4}$ & 0.97 & 0.93 \\
    \hline
  \end{tabular}
  \caption{Errors and experimental order of convergence in discrete $L^1$ norm for a weakly singular solitary wave with $s=1.17$, $\delta=0.01$ (UNO2 reconstruction with MinMod limiter and Kurganov-Tadmor flux)}\label{tab:T4}
  \end{table}
\begin{table}[t]
  \centering
  \begin{tabular}{ccccccr}
    \hline
    $\Delta x$  & $E^2(\eta;t=2)$ & $E^2(u;t=2)$ & rate $\eta$ & rate $u$  \\
    \hline
  0.1000 & $7.765414 \times 10^{-3}$ & $6.091341 \times 10^{-3}$ & -- & -- \\
  0.0500 & $5.258403 \times 10^{-3}$ & $4.345916 \times 10^{-3}$ & 0.56 & 0.49 \\
  0.0100 & $1.890633 \times 10^{-3}$ & $1.704965 \times 10^{-3}$ & 0.64 & 0.58 \\
  0.0050 & $1.043506 \times 10^{-3}$ & $9.583431 \times 10^{-4}$ & 0.86 & 0.83 \\
  0.0025 & $5.271910 \times 10^{-4}$ & $4.880188 \times 10^{-4}$ & 0.99 & 0.97 \\
    \hline
  \end{tabular}
  \caption{Errors and experimental order of convergence in discrete $L^2$ norm for a weakly singular solitary wave with $s=1.17$, $\delta=0.01$ (UNO2 reconstruction with MinMod limiter and Kurganov-Tadmor flux)}\label{tab:T5}
  \end{table}
\begin{table}[t]
  \centering
  \begin{tabular}{ccccccr}
    \hline
    $\Delta x$  & $E^\infty(\eta;t=2)$ & $E^\infty(u;t=2)$ & rate $\eta$ & rate $u$  \\
    \hline
  0.1000 & $1.462075 \times 10^{-2}$ & $1.125463 \times 10^{-2}$ & -- & -- \\
  0.0500 & $1.398974 \times 10^{-2}$ & $9.306038 \times 10^{-3}$ & 0.06 & 0.27 \\
  0.0100 & $7.225315 \times 10^{-3}$ & $5.558449 \times 10^{-3}$ & 0.41 & 0.32 \\
  0.0050 & $4.556731 \times 10^{-3}$ & $4.221170 \times 10^{-3}$ & 0.67 & 0.40 \\
  0.0025 & $3.299727 \times 10^{-3}$ & $2.953141 \times 10^{-3}$ & 0.47 & 0.52 \\
    \hline
  \end{tabular}
  \caption{Errors and experimental order of convergence in discrete $L^\infty$ norm for a weakly singular solitary wave with $s=1.17$, $\delta=0.01$ (UNO2 reconstruction with MinMod limiter and Kurganov-Tadmor flux)}\label{tab:T6}
  \end{table}

Tables~\ref{tab:T1}--\ref{tab:T3} and \ref{tab:T4}--\ref{tab:T6} report the errors and experimental orders of convergence at $T=2$ obtained with the Kurganov--Tadmor flux and the MinMod limiter, for the TVD2 and UNO2
reconstructions respectively. The remaining combinations of fluxes and limiters produced quantitatively similar results and are not reproduced here.

A useful benchmark for context is the classical convergence theory for finite-volume schemes applied to scalar conservation laws with discontinuous (bounded variation) solutions. The general $L^1$ bound is $O(\sqrt{\Delta x})$, due to Kuznetsov \cite{Kuz1976} and known to be sharp \cite{TT1995,Sab1997} for monotone schemes against generic bounded variation data. Sharper $O(\Delta x)$ rates are recovered for piecewise smooth solutions containing finitely many shocks \cite{TZ1997}, while the pointwise error remains $O(1)$ across each discontinuity \cite{NTT1994}. The values $1/2$ and $1$ thus bracket the standard reference range for finite-volume approximations of genuine shock
discontinuities.

The convergence behavior observed in Tables~\ref{tab:T1}--\ref{tab:T6} departs from this benchmark in a structured way. In the discrete $L^1$ and $L^2$ norms, the experimental order is somewhat erratic on coarse grids but
trends upward under refinement, eventually exceeding $1/2$ in all cases. On the finest mesh, the UNO2 reconstruction reaches approximately $1$, while TVD2 reaches approximately $0.7$--$0.8$ over the same range. The difference
between the two reconstructions is consistent with their design: UNO2 retains second-order accuracy at local extrema \cite{HaOs}, whereas TVD limiters reduce to first order there \cite{Sweby1984}, and the singular
point of a weakly singular front is precisely such an extremum of the derivative. In both cases the asymptotic rate stays above the $1/2$ benchmark of the discontinuous-shock theory, which is consistent with the
front being continuous.

In the discrete $L^\infty$ norm, by contrast, the rate fluctuates around $1/2$ for both reconstructions, in line with the H\"older $C^{0,2/3}$ regularity at the cusp. Nevertheless, $E^\infty$ does decrease with
$\Delta x$ in every case, which is itself a numerical signature of the continuity of the front. For a genuine discontinuity the $L^\infty$ error would remain $O(1)$ and would not converge at all. Qualitatively identical
patterns were observed for other parameter values, for instance $\delta=1$. We emphasize that the reference solution is itself numerical and not exact, so the rates reported here should be regarded as indicative and warrant
further investigation.

We close with a remark on the boundary conditions. The Dirichlet boundary conditions used in this convergence study performed without issue. Nevertheless, because the experiments that follow involve reflections and
wave interactions, periodic boundary conditions are more convenient, and we adopt them throughout the remainder of this section.

\subsection{Interaction of weakly singular shock waves}

Thus far, we have considered the propagation of isolated  weakly singular shock waves. In many physical scenarios, however, sometimes shocks do not evolve in isolation but rather interact with one another or with other wave families. We now turn to a closer examination of these interactions and in particular the head-on collision of such singular fronts. 

\begin{figure}[t]
\centering
\includegraphics[width=\columnwidth]{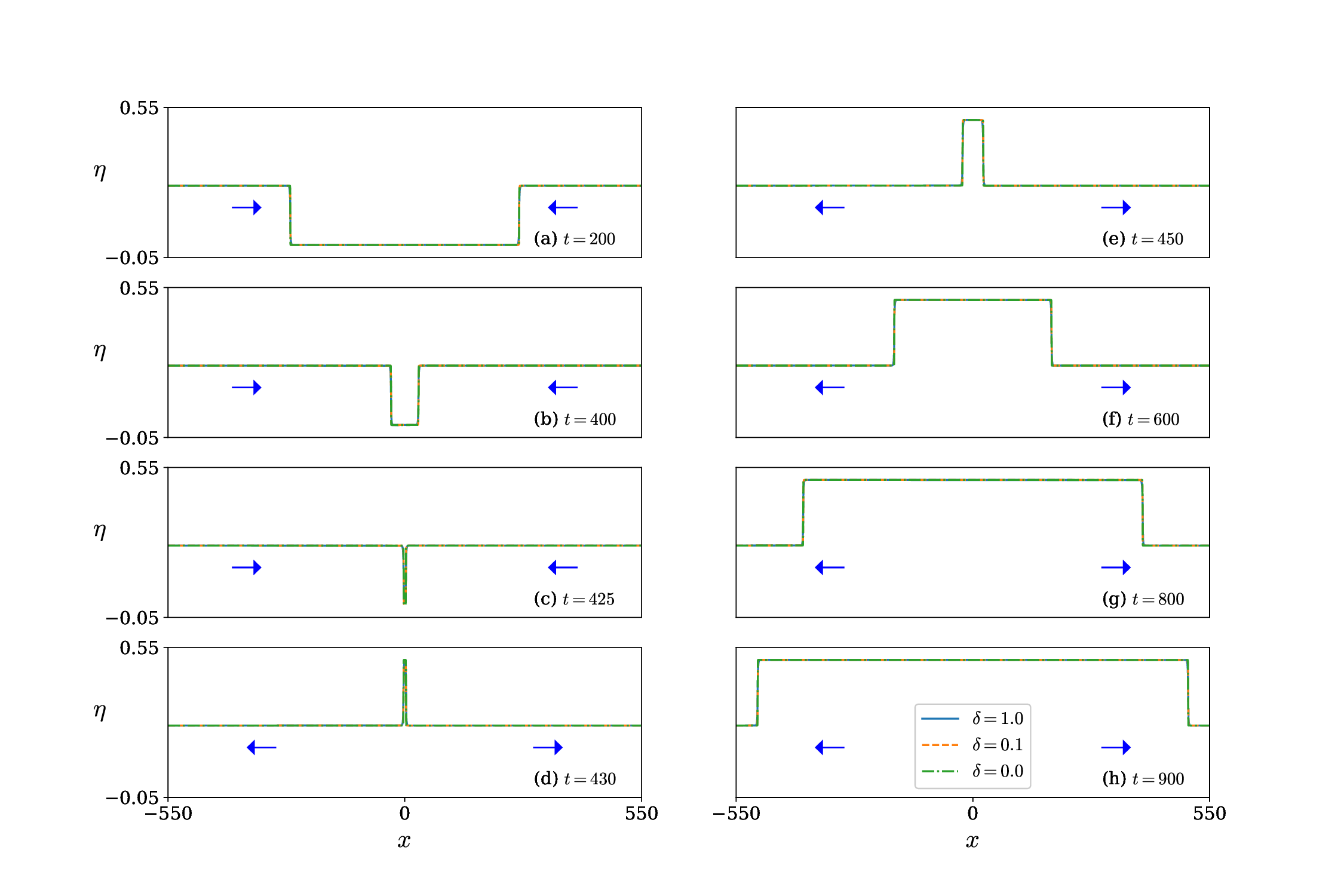}
\caption{Head-on collision of weakly singular shock waves $\delta=0, 0.1$ and $1$ (TVD2 reconstruction with MinMod limiter and Kurganov-Tadmor flux)}
\label{fig:headon0}
\end{figure}

\begin{figure}[t]
\centering
\includegraphics[width=0.7\columnwidth]{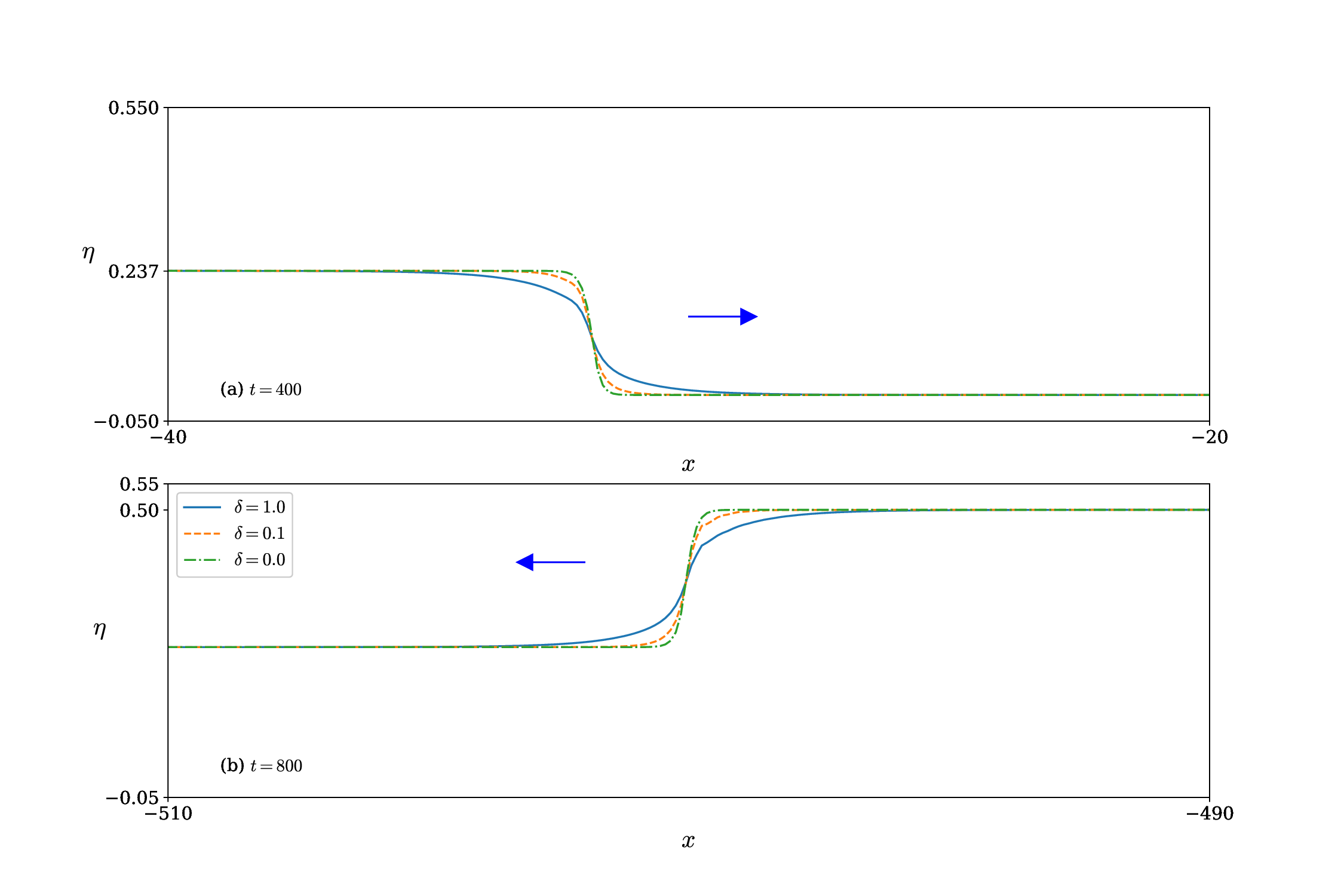}
\caption{Magnification of Figure \ref{fig:headon0} around the wave front}
\label{fig:headon0zoom}
\end{figure}

In this experiment, we consider two $tanh$-profiles, similar to (\ref{eq:initconddb}), oriented in opposite directions as shown in Figure \ref{fig:headon0}(a). This initial condition generates two weakly singular shock waves of the same amplitude as before ($\eta=0.237549$), which propagate in opposite directions and eventually collide. Figure \ref{fig:headon0} illustrates the collision of these waves for $\delta = 0$, $0.1$, and $1$. During the interaction, which takes place in the time interval $(400,450)$, the two weakly singular fronts merge and give rise to two new weakly singular fronts. These propagate on top of the original waves but in opposite directions. The interaction is nonlinear, as the amplitude of the resulting waves increases to $\eta_{-} = 0.500318$ while $\eta_{+}=0.237549$. Moreover, we have $u_{-}=0$ and $u_{+}=-0.224586$. Therefore, the speed of the right-traveling shock wave from (\ref{eq:equilib}) is approximately $1.058$. As expected, the interaction is non-dispersive, and no dispersive oscillations are observed. The arrows in Figure \ref{fig:headon0} indicate the propagation direction of the corresponding wave fronts.

Although no visible differences can be observed among the various solutions in Figure \ref{fig:headon0}, a closer inspection in Figure \ref{fig:headon0zoom}, which shows a magnification of the wave front for $x<0$, reveals that the traveling fronts become smoother as $\delta$ increases. Furthermore, the singular point is preserved for all values of $\delta$, within numerical accuracy, in agreement with the theoretical discussion presented earlier. Moreover, similar very small oscillations are generated on the smooth front due to the singularity in the derivative. These are visible mainly for $\delta=1$. This experiment, together with the previous ones, also demonstrates the stability of weakly singular shock waves.

\subsection{Generation of localized solutions}

In this section we integrate numerically system (\ref{eq:bous}) with initial condition 
\begin{equation}\label{eq:initcond2}
    \eta(x,0)=e^{-x^2/10},\quad u(x,0)=0\ ,
\end{equation}
for $x \in [-1000,1000]$ under periodic boundary conditions. Simulations were carried out for $\delta = 0, 0.1,$ and $1$. This initial condition generates two waves propagating in opposite directions, which eventually break and form either a weakly singular front (when $\delta \neq 0$) or a classical shock wave (when $\delta = 0$).

\begin{figure}[t]
\centering
\includegraphics[width=0.7\columnwidth]{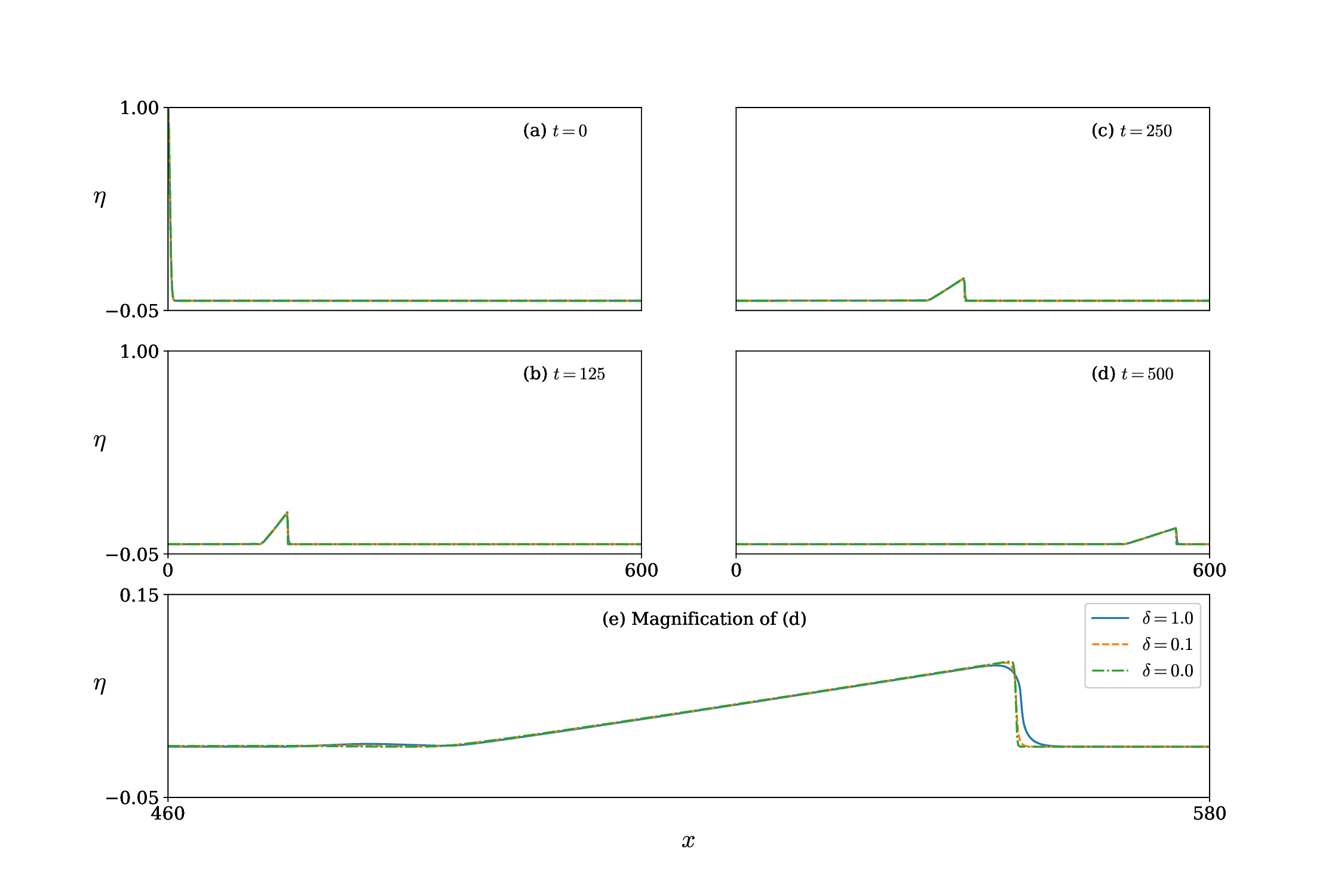}
\caption{Generation of weakly singular localized wave for $\delta=1$, $0.1$ and comparison with classical shock wave of $\delta=0$ (TVD2 reconstruction with MinMod limiter and Kurganov-Tadmor flux)}
\label{fig:localize}
\end{figure}

\begin{figure}[t]
\centering
\includegraphics[width=0.7\columnwidth]{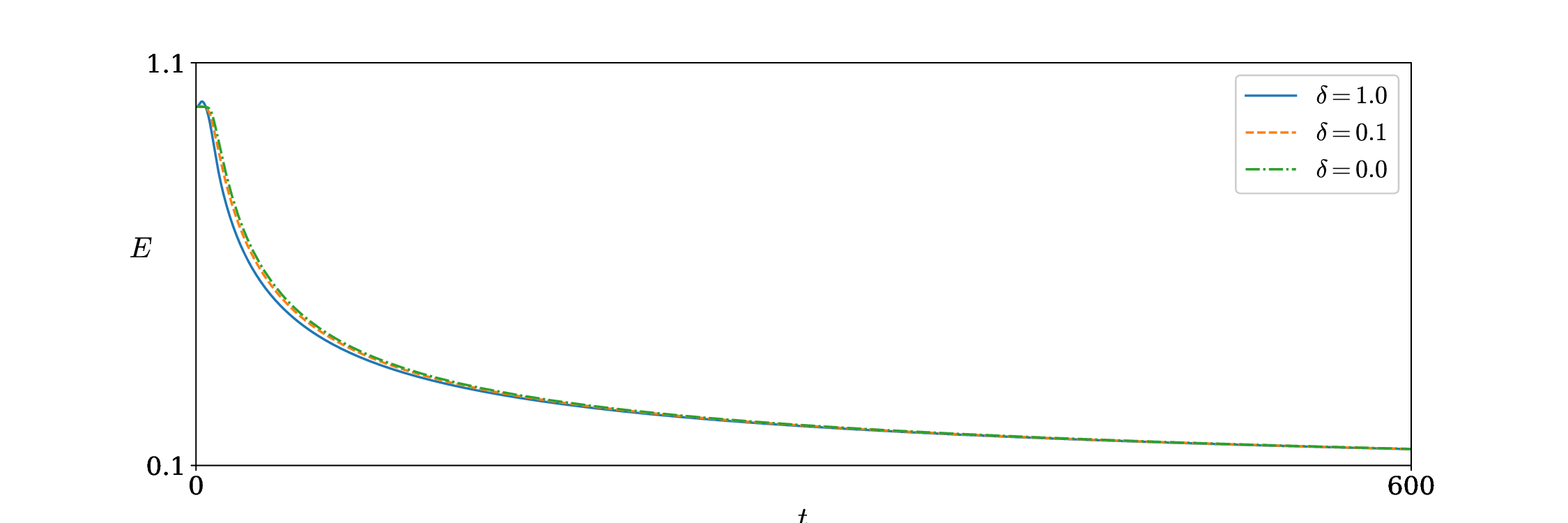}
\caption{Energy $E(t)$ of the solution as a function of time (TVD2 reconstruction with MinMod limiter and Kurganov-Tadmor flux)}
\label{fig:energy2}
\end{figure}

Figure \ref{fig:localize} illustrates the formation of these weakly singular wave fronts. As before, we present only half of the computational domain, since the other half is symmetric. A magnification of the wave front is shown in Figure \ref{fig:localize}(e), highlighting the differences among the various values of $\delta$. Once again, the front becomes progressively less steep as $\delta$ increases. It is also worth noting that, during the formation of the weakly singular wave, a small-amplitude trailing tail emerges. The size of this tail grows with increasing $\delta$.
Furthermore, since these waves are not traveling, differences in the phase of the singular fronts can be observed: the phase speed of the wave front increases with larger values of $\delta$. In contrast, as $\delta \to 0$, the weakly singular solution appears to converge toward the discontinuous solution of the classical shallow water equations.

This experiment demonstrates that cusped solitons, which are possible solutions of the inviscid regularization of the Boussinesq system (\ref{eq:bous}), may not be stable. In general, they cannot be generated from arbitrary initial conditions and do not propagate in a stable manner when used as initial conditions themselves. Moreover, the solutions of (\ref{eq:bous}) appear to follow the same qualitative patterns as those of the shallow water equations (\ref{eq:sw1}).

Finally, Figure \ref{fig:energy2} presents the computed energy $E(t)$ for the three different cases of $\delta$. Again, the energy for $\delta=1$ and $\delta=0.1$ approximate the energy of the solution for $\delta=0$. However, in this case the energy for $\delta>0$ is increasing initially, showing that the system is not in general dissipative. Also, we observe that the dissipation rate is proportional to the value of $\delta$. Thus larger values of $\delta$ result in smaller energy for large times. 

\subsection{Interaction of weakly singular shock wave with a sudden depth transition}\label{sec:bottom}

We close this work with a study of the system in the presence of a variable bottom topography. In particular, we perform a comparison between the interaction of a weakly singular shock wave of the regularized system and the shallow water equations with steep bottom variations. In this experiment we use the same initial condition as in Section \ref{sec:wekshoK}, while the depth function was chosen such as
$$D(x)= 2-\frac{1}{2}\left[2-\tanh\left(\tfrac{1}{2}(x-900)\right)+\tanh\left(\tfrac{1}{2}(x-500)\right)\right]\ .$$ 
The initial profile and the bottom topography are presented in Figure \ref{fig:bottom}. For the numerical solution we used both Kurganov-Tadmor and Characteristic flux with $\Delta x=0.05$ and $\Delta t=0.025$, however because the results are almost identical, we only present the results obtained with the Kurganov-Tadmor flux with the TVD2-MinMod limiter. In these experiments we took once again $g=1$.

\begin{figure}[t]
\centering
\includegraphics[width=0.6\columnwidth]{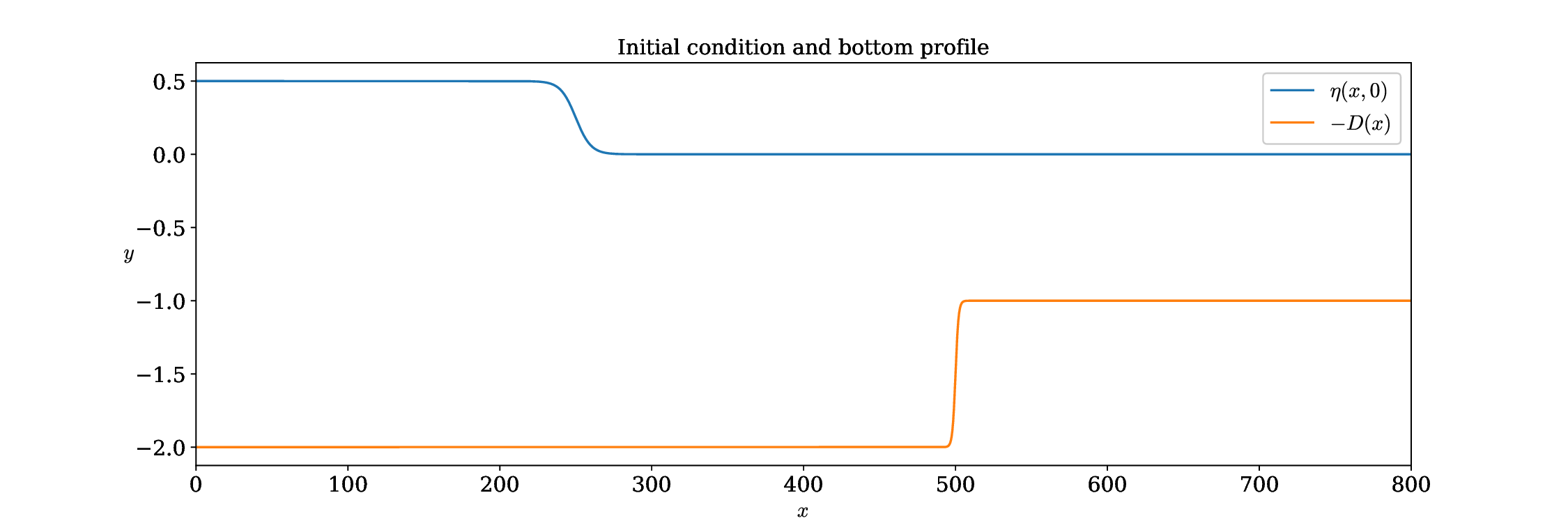}
\caption{Initial condition and bottom profile}
\label{fig:bottom}
\end{figure}
\begin{figure}[ht!]
\centering
\includegraphics[width=1\columnwidth]{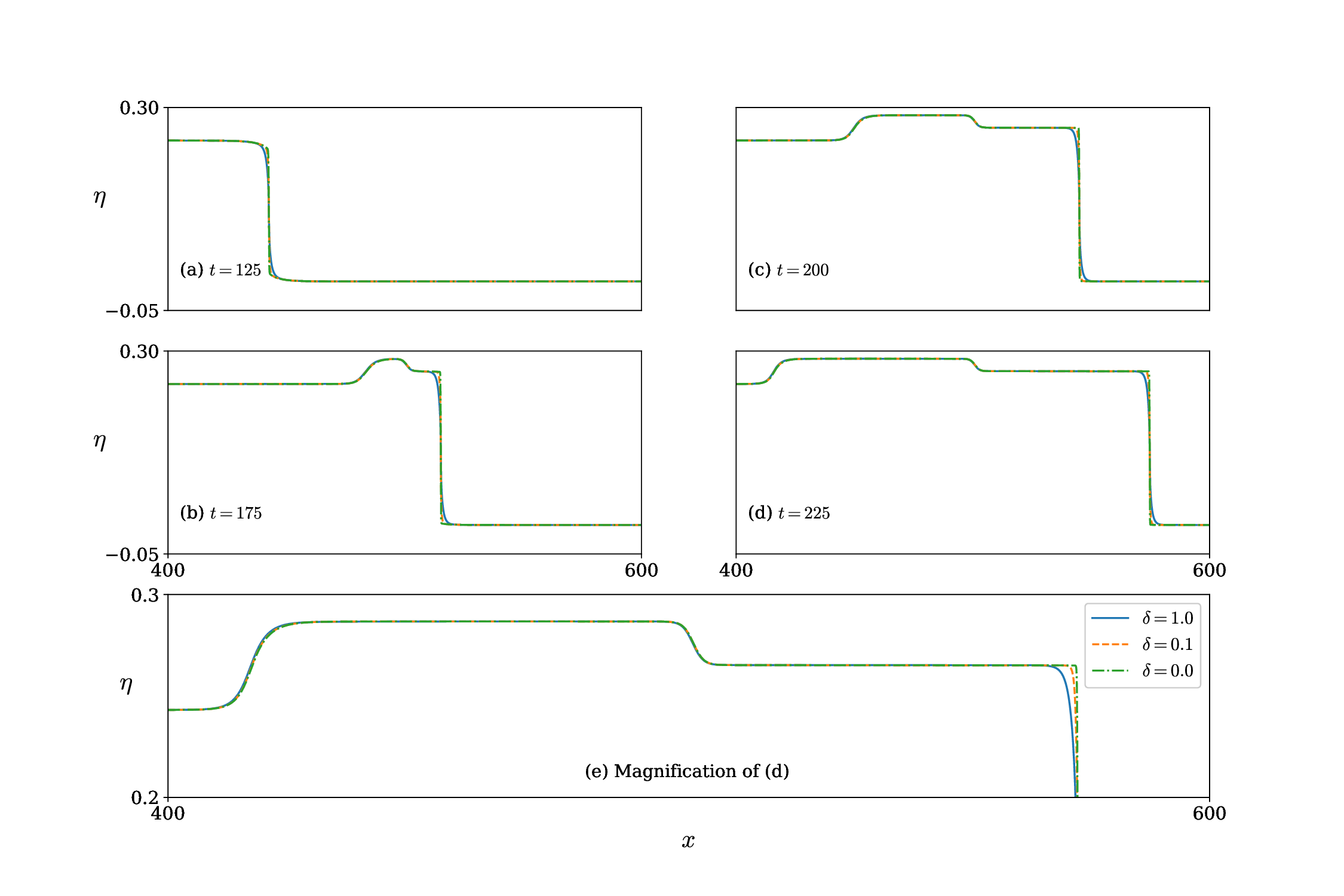}
\caption{Interaction of weakly singular shock wave with steep gradient bottom topography (TVD2 reconstruction with MinMod limiter and Kurganov-Tadmor flux)}
\label{fig:botinterc}
\end{figure}

After the shock wave is formed, it interacts with the steep bottom topography and part of it is reflected back while a new shock wave emerges with higher amplitude. This is depicted in Figure \ref{fig:botinterc}, where we observe that the solutions of the two models agree at all points, including in the speeds of the waves and the connecting states. However, as expected the solutions with a higher value for the regularization parameter $\delta$ have smoother shock front compared to the others with smaller value $\delta$. Impressively, the reflecting wave appears to be almost identical, indicating that this approximation is more accurate compared to the Hamiltonian regularization of \cite{CDM2019}. 

\section{Conclusions}\label{sec:conclusions}

In this study we investigated a non-dispersive, non-dissipative regularization of the shallow water equations that admits weakly singular traveling-wave solutions, which are continuous profiles possessing an unbounded derivative at an isolated point. Using a dynamical-systems analysis of the profile equations, we constructed such solutions, both weakly singular shock waves connecting two distinct states and cusped solitons, by concatenating phase-plane orbits across the interior singularity of the flow. We then placed this construction on a rigorous footing, proving that the resulting profiles are weak solutions of the traveling-wave equations in the sense of distributions, the apparent obstruction posed by the singular derivative being removed by the continuity of the momentum flux. On the level of the full system, we further showed that, for small $\delta$ and as long as the solution remains smooth, the energy of the regularized model stays within $O(\sqrt{\delta}\,t)$ of that of the hyperbolic shallow water equations.

To study these solutions dynamically, we developed and compared several finite volume methods that combine the Kurganov--Tadmor and characteristic fluxes with various limiters and TVD2/UNO2 reconstructions. Applying these methods to numerically approximate weakly singular traveling fronts of the regularized system, we found that the TVD2 reconstruction achieves slightly better resolution compared to UNO2 for these particular weakly singular fronts. The simulations showed that this regularization reproduces the dynamics of the nonlinear shallow water model accurately, including the formation of classical shock waves in the limit $\delta\to0$. The weakly singular shock waves proved numerically to be dynamically stable as they emerged from generic smooth initial data connecting different states, survived head-on interactions, and interacted with steep bottom topography much as their hyperbolic counterparts do. By contrast, we were unable to generate cusped solitons from generic initial conditions, which suggests that these structures are unstable. Finally, the experiments confirmed that solutions of the regularized system neither conserve nor monotonically dissipate the shallow-water energy in general, energy dissipation sets in precisely when a weakly singular front develops, consistently with the behavior of the hyperbolic shallow water equations.

These results indicate that weakly singular shock waves arise not from a Hamiltonian structure but from the interplay of high-order terms with the non-dispersive character of the equations, and that finite volume methods are well suited to their approximation.

\bibliographystyle{plain} 
\bibliography{biblio}

\end{document}